\titleformat{\section}{\normalsize\bfseries}{\thesection}{1em}{}
\titleformat{\subsection}{\normalsize\bfseries}{\thesubsection}{1em}{}
\numberwithin{equation}{subsection}
\theoremstyle{definition}
\newtheorem{defn}[subsection]{Definition}
\newtheorem{para}[subsection]{}
\newtheorem*{assumption*}{Assumption}
\newtheorem{defn_sub}[subsubsection]{Definition}
\newtheorem{para_sub}[subsubsection]{}
\newtheorem{ex_sub}[subsubsection]{Example}
\newtheorem{rmk_sub}[subsubsection]{Remark}
\theoremstyle{plain}
\newtheorem{prop}[subsection]{Proposition}
\newtheorem{theo}[subsection]{Theorem}
\newtheorem{lem}[subsection]{Lemma}
\newtheorem{cor}[subsection]{Corollary}
\newtheorem*{claim*}{Claim}
\newtheorem*{just*}{Justification}
\newtheorem*{lem*}{Lemma}
\newtheorem*{prop*}{Proposition}
\newtheorem{prop_sub}[subsubsection]{Proposition}
\newcommand{\V}{\mathscr{V}}
\newcommand{\ob}{\mathop{\mathsf{ob}}}
\newcommand{\tensor}{\otimes}
\newcommand{\Mnd}{\mathsf{Mnd}}
\newcommand{\Cat}{\text{-}\mathsf{Cat}}
\newcommand{\underJ}{\kern -0.5ex \mathscr{J}}
\newcommand{\Set}{\mathsf{Set}}
\newcommand{\ev}{\mathsf{ev}}
\newcommand{\op}{\mathsf{op}}
\newcommand{\Th}{\mathsf{Th}}
\newcommand{\T}{\mathbb{T}}
\newcommand{\A}{\mathscr{A}}
\newcommand{\Mod}{\text{-}\mathsf{Mod}}
\newcommand{\scrS}{\mathscr{S}}
\newcommand{\Alg}{\text{-}\mathsf{Alg}}
\newcommand{\N}{\mathbb{N}}
\newcommand{\Monadic}{\text{-}\mathsf{Monadic}}
\newcommand{\Term}{\mathsf{Term}}
\newcommand{\sff}{\mathsf{sf}}
\newcommand{\scrT}{\mathscr{T}}
\newcommand{\Pow}{\mathsf{Pow}}
\newcommand{\CAT}{\text{-}\mathsf{CAT}}
\newcommand{\bbC}{\mathbb{C}}
\newcommand{\Gph}{\mathsf{Graph}}
\newcommand{\calE}{\mathcal{E}}
\newcommand{\Var}{\mathsf{Var}}
\newcommand{\Pos}{\mathsf{Pos}}
\newcommand{\Str}{\text{-}\mathsf{Str}}
\newcommand{\calT}{\mathcal{T}}
\newcommand{\sfE}{\mathsf{E}}
\newcommand{\calS}{\mathcal{S}}
\newcommand{\calC}{\mathcal{C}}
\newcommand{\calJ}{\mathcal{J}}
\newcommand{\calP}{\mathcal{P}}
\newcommand{\scrE}{\mathscr{E}}
\newcommand{\Top}{\mathsf{Top}}
\newcommand{\Preord}{\mathsf{Preord}}
\newcommand{\PMet}{\mathsf{PMet}}
\newcommand{\Simp}{\mathsf{Simp}}
\newcommand{\calQ}{\mathcal{Q}}
\newcommand{\QuasiSp}{\mathsf{QuasiSp}}
\newcommand{\vbar}{\vec{v}}
\newcommand{\scrP}{\mathscr{P}}
\newcommand{\VCAT}{\V\text{-}\mathsf{CAT}}
\newcommand{\frakT}{\mathfrak{T}}
\newcommand{\SFF}{\mathsf{SF}}
\newcommand{\RGph}{\text{-}\mathsf{RGph}}
\newcommand{\Met}{\mathsf{Met}}
\newcommand{\calR}{\mathcal{R}}
\newcommand{\CPO}{\mathsf{CPO}}
\newcommand{\DCPO}{\mathsf{DCPO}}
\newcommand{\oCPO}{\omega\text{-}\CPO}
\newcommand{\sfar}{\mathsf{ar}}
\newcommand{\Q}{\mathcal{Q}}
\newcommand{\UltMet}{\mathsf{UltMet}}
\begin{document}

\title{\Large \textbf{Strongly finitary monads and multi-sorted varieties enriched in cartesian closed concrete categories}}
\author{Jason Parker\let\thefootnote\relax\thanks{We acknowledge support from the Postdoctoral Program of the Department of Mathematics and Statistics at the University of Calgary, from the Natural Sciences and Engineering Research Council of Canada (NSERC), and from the Pacific Institute for the Mathematical Sciences (PIMS).}\medskip
\\
\small University of Calgary
\\
\small Calgary, Alberta, Canada}
\date{}

\maketitle

\begin{abstract}
It is a classical result of categorical algebra, due to Lawvere and Linton, that \emph{finitary varieties of algebras} (in the sense of Birkhoff) are dually equivalent to \emph{finitary monads} on $\Set$. Recent work of Ad\'{a}mek--Dost\'{a}l--Velebil has established that analogous results also hold in certain \emph{enriched} contexts. Specifically, taking $\V$ to be one of the cartesian closed categories $\mathsf{Pos}$, $\mathsf{UltMet}$, $\oCPO$, or $\DCPO$ of respectively posets, (extended) ultrametric spaces, $\omega$-cpos, or dcpos, Ad\'{a}mek--Dost\'{a}l--Velebil have shown that a suitable category of \emph{$\V$-enriched varieties of algebras} is dually equivalent to the category of \emph{strongly finitary $\V$-monads} on $\V$. 

In this paper, we extend and generalize these results in two ways: by allowing $\V$ to be an arbitrary complete and cocomplete cartesian closed category that is \emph{concrete} over $\Set$, and by also considering the multi-sorted case. Given a set $\calS$ of \emph{sorts}, we define a suitable notion of \emph{(finitary) $\V$-enriched $\calS$-sorted variety}, and we say that a $\V$-monad on the product $\V$-category $\V^\calS$ is \emph{strongly finitary} if its underlying $\V$-endofunctor is the left Kan extension of its restriction to a suitable full sub-$\V$-category $\N_\calS \hookrightarrow \V^\calS$. Our main result is that the category of $\V$-enriched $\calS$-sorted varieties is dually equivalent to the category of strongly finitary $\V$-monads on $\V^\calS$. By taking $\calS$ to be a singleton and $\V$ to be $\mathsf{Pos}, \mathsf{UltMet}, \oCPO$, or $\DCPO$, we thus recover the aforementioned results of Ad\'{a}mek--Dost\'{a}l--Velebil. We provide several classes of examples of $\V$-enriched $\calS$-sorted varieties, many of which admit very concrete, syntactic formulations.  
\end{abstract}

\section{Introduction}

It has been known since the work of Lawvere \cite{Law:PhD} and Linton \cite{Lintonequational} that \emph{finitary varieties of algebras} in the sense of Birkhoff \cite{Birkhoff}, i.e.~categories of finitary algebraic structures axiomatized by syntactic equations, are dually equivalent to \emph{finitary monads} on $\Set$, i.e.~monads whose underlying endofunctors preserve filtered colimits. It is also well known that a monad $\T = (T, \eta, \mu)$ on $\Set$ is finitary iff it is \emph{strongly finitary}, meaning that $T : \Set \to \Set$ is the left Kan extension of its restriction to the full subcategory $\mathsf{FinSet} \hookrightarrow \Set$ of finite sets. Thus, finitary varieties of algebras are dually equivalent to strongly finitary monads on $\Set$.

Recent work of Ad\'{a}mek--Dost\'{a}l--Velebil has shown that this classical result can be suitably generalized to certain \emph{enriched} settings. Given a complete and cocomplete cartesian closed category $\V$, we write $\N_\V \hookrightarrow \V$ for the full sub-$\V$-category consisting of the finite copowers $n \cdot 1$ ($n \in \N$) of the terminal object $1$. A $\V$-monad $\T = (T, \eta, \mu)$ on $\V$ is \emph{strongly finitary} \cite{KellyLackstronglyfinitary} if $T : \V \to \V$ is the left Kan extension of its restriction to $\N_\V \hookrightarrow \V$. Taking $\V$ to be the cartesian closed category $\Pos$ of posets, in \cite{categoricalviewordered} Ad\'{a}mek--Dost\'{a}l--Velebil established that the category of strongly finitary $\V$-monads on $\V = \Pos$ is dually equivalent to the category of \emph{varieties of ordered algebras}, which are $\Pos$-enriched categories of finitary algebras in $\Pos$ that satisfy certain syntactic \emph{inequations} between terms. Taking $\V$ to be the cartesian closed category $\oCPO$ of $\omega$-cpos, Ad\'{a}mek--Dost\'{a}l--Velebil subsequently proved in \cite{strongly_finitary_cts} that the category of strongly finitary $\V$-monads on $\V = \oCPO$ is dually equivalent to the category of \emph{varieties of continuous algebras}, which are ($\oCPO$)-enriched categories of finitary algebras in $\oCPO$ that satisfy certain syntactic equations between \emph{extended terms}; they also established a similar result for $\V = \DCPO$ (the cartesian closed category of dcpos). And taking $\V$ to be the cartesian closed category $\mathsf{UltMet}$ of (extended) \emph{ultrametric spaces}\footnote{An \emph{(extended) ultrametric space} is an (extended) metric space $\left(X, d : X \times X \to [0, \infty]\right)$ that satisfies the stronger triangle inequality $d(x, z) \leq \mathsf{max}\left\{d(x, y), d(y, z)\right\} (x, y, z \in X)$.}, in \cite{strongly_finitary_quant} Ad\'{a}mek--Dost\'{a}l--Velebil established that the category of strongly finitary $\V$-monads on $\V = \mathsf{UltMet}$ is dually equivalent to the category of \emph{varieties of ultra-quantitative algebras}, which are $\UltMet$-enriched categories of finitary algebras in $\mathsf{UltMet}$ that satisfy certain syntactic \emph{quantitative equations} between terms. 

In this paper, we show that these results of Ad\'{a}mek--Dost\'{a}l--Velebil extend to a more general enriched and \emph{multi-sorted} setting. Specifically, we consider an arbitrary complete and cocomplete cartesian closed category $\V$ that is \emph{concrete} over $\Set$, meaning that $\V$ is equipped with a faithful functor $|-| : \V \to \Set$ that is represented by the terminal object; in particular, the categories $\Pos, \UltMet, \oCPO$, and $\DCPO$ satisfy these conditions. Given such a category $\V$ and a set $\calS$ of \emph{sorts}, we define a notion of \emph{(finitary) $\V$-enriched $\calS$-sorted variety} and a notion of \emph{strongly finitary} $\V$-monad on the product $\V$-category $\V^\calS$. Our central result (Theorem \ref{variety_thm}) states that the category $\Var\left(\V^\calS\right)$ of $\V$-enriched $\calS$-sorted varieties is dually equivalent to the category $\Mnd_\sff\left(\V^\calS\right)$ of strongly finitary $\V$-monads on $\V^\calS$. As we show in \S\ref{examples_section}, when we take $\V$ to be $\Pos, \UltMet, \oCPO$, or $\DCPO$, our $\V$-enriched \emph{single-sorted} varieties recover the various notions of enriched variety studied by Ad\'{a}mek--Dost\'{a}l--Velebil in \cite{categoricalviewordered, strongly_finitary_cts, strongly_finitary_quant}, and thus our central result (Theorem \ref{variety_thm}) specializes to recover some of the central results of these papers.   

In detail, in \S\ref{theories_section} we first define a notion of \emph{$\V$-enriched $\calS$-sorted signature}, which is a set $\Sigma$ of \emph{operation symbols} equipped with an assignment to each operation symbol $\sigma \in \Sigma$ of a finite tuple $(S_1, \ldots, S_n) \in \calS^*$ of \emph{input sorts}\footnote{We shall actually define the notion of \emph{input sorts} in a  slightly different (but equivalent) way in Definition \ref{signature}, once we have established certain notation in \ref{given_data}.}, an \emph{output sort} $S \in \calS$, and a \emph{parameter object} $P$ of $\V$. A \emph{$\Sigma$-algebra} $A$ is then an object $A = \left(A_S\right)_{S \in \calS}$ of $\V^\calS$ equipped with, for each operation symbol $\sigma \in \Sigma$ as above, a $\V$-morphism
\[ \sigma^A : P \to \left[A_{S_1} \times \ldots \times A_{S_n}, A_S\right], \] where we write $[-, -]$ for the internal hom of the cartesian closed category $\V$. Under our hypotheses on $\V$, such a $\V$-morphism is equivalently given by a $|P|$-indexed family $\left(\sigma_p^A : A_{S_1} \times \ldots \times A_{S_n} \to A_S\right)_{p \in |P|}$ of $\V$-morphisms with the property that the function $\sigma^A : |P| \to \V\left(A_{S_1} \times \ldots \times A_{S_n}, A_S\right)$ given by $p \mapsto \sigma_p^A$ ($p \in |P|$) is \emph{$\V$-admissible}, i.e.~lifts (uniquely) to a $\V$-morphism $\sigma^A : P \to \left[A_{S_1} \times \ldots \times A_{S_n}, A_S\right]$. There is a $\V$-category $\Sigma\Alg$ of $\Sigma$-algebras, which is equipped with a forgetful $\V$-functor $U^\Sigma : \Sigma\Alg \to \V^\calS$ that sends each $\Sigma$-algebra $A$ to the object $A$ of $\V^\calS$.  

Every $\V$-enriched $\calS$-sorted signature $\Sigma$ canonically determines an associated \emph{classical} $\calS$-sorted signature $|\Sigma|$, and we then define a \emph{$\V$-enriched $\calS$-sorted equational theory} to be a pair $\calT = (\Sigma, \calE)$ consisting of a $\V$-enriched $\calS$-sorted signature $\Sigma$ and a set $\calE$ of \emph{syntactic $|\Sigma|$-equations} between terms over the associated classical $\calS$-sorted signature $|\Sigma|$. We write $\calT\Alg$ for the full sub-$\V$-category of $\Sigma\Alg$ consisting of the \emph{$\calT$-algebras}, i.e.~the $\Sigma$-algebras that satisfy all of the equations in $\calE$, and $\calT\Alg$ is equipped with a forgetful $\V$-functor $U^\calT : \calT\Alg \to \V^\calS$, so that we may regard $\calT\Alg$ as an object of the slice category $\V\CAT/\V^\calS$. We then define a \emph{$\V$-enriched $\calS$-sorted variety} to be an object of $\V\CAT/\V^\calS$ of the form $\calT\Alg$ for some $\V$-enriched $\calS$-sorted equational theory $\calT$. 

We write $\N_\calS \hookrightarrow \V^\calS$ for the full sub-$\V$-category of $\V^\calS$ consisting of the objects of the form $\left(n_S \cdot 1\right)_{S \in \calS}$, where each $n_S \in \N$ ($S \in \calS$) and $n_S = 0$ for all but finitely many $S \in \calS$; when $\calS$ is a singleton, we have $\N_\calS = \N_\V \hookrightarrow \V$. This full sub-$\V$-category is \emph{dense} (in the enriched sense), and thus it is a \emph{subcategory of arities} in the sense of \cite{Pres, Struct}. Adopting terminology introduced by Kelly and Lack \cite{KellyLackstronglyfinitary} in the single-sorted case (but noting that they did not assume that $\V$ is concrete over $\Set$), we say that a $\V$-monad $\T = (T, \eta, \mu)$ on $\V^\calS$ is \emph{strongly finitary} if $T : \V^\calS \to \V^\calS$ is the ($\V$-enriched) left Kan extension of its restriction to $\N_\calS \hookrightarrow \V^\calS$. 

In \S\ref{diag_section} we prove, using results of \cite{EP}, that every $\V$-enriched $\calS$-sorted equational theory \emph{presents} a strongly finitary $\V$-monad on $\V^\calS$. In \S\ref{converse_section}, again using results of \cite{EP}, we then establish the converse result, that every strongly finitary $\V$-monad on $\V^\calS$ is \emph{presented by} a $\V$-enriched $\calS$-sorted equational theory. These two results combine to yield the main result of the paper (Theorem \ref{variety_thm}), that the category $\Var\left(\V^\calS\right)$ of $\V$-enriched $\calS$-sorted varieties is dually equivalent to the category $\Mnd_\sff\left(\V^\calS\right)$ of strongly finitary $\V$-monads on $\V^\calS$. We conclude \S\ref{converse_section} with the result (Corollary \ref{variety_thm_2}) that the category $\Var\left(\V^\calS\right)$ is also dually equivalent to the category $\Th_{\N_\calS}\left(\V^\calS\right)$ of \emph{$\N_\calS$-theories} (in the sense of \cite{Struct}); when $\calS$ is a singleton, so that $\N_\calS = \N_\V \hookrightarrow \V$, these $\N_\V$-theories are precisely the enriched algebraic theories studied by Borceux and Day \cite{BorceuxDay}.

In \S\ref{examples_section}, we provide many examples of cartesian closed categories $\V$ that satisfy our assumptions, and in several cases we also establish even more concrete, syntactic formulations of $\V$-enriched $\calS$-sorted equational theories, in which arbitrary $\V$-enriched $\calS$-sorted signatures are replaced by \emph{classical} $\calS$-sorted signatures (where all parameter objects are trivial) together with certain additional syntactic constructions related to $\V$. We briefly discuss the classical example of $\V = \Set$ in \S\ref{set_subsection}, and how our main result (Theorem \ref{variety_thm}) specializes to recover the classical result that the category of finitary $\calS$-sorted varieties is dually equivalent to the category of (strongly) finitary monads on $\Set^\calS$ (see e.g.~\cite[Chapter 14]{Algebraic_theories}). In \S\ref{rel_subsection} we show that if $\T$ is any \emph{(reflexive) relational Horn theory} for which the category $\T\Mod$ of \emph{$\T$-models} is cartesian closed, then $\V = \T\Mod$ satisfies our assumptions. Prominent examples of $\V = \T\Mod$ include the categories $\Preord$ of preordered sets and $\Pos$ of partially ordered sets; the category $\calQ\Cat$ of small $\calQ$-categories for a complete Heyting algebra $\calQ$; the category $\UltMet$ of (extended) \emph{ultrametric spaces}; and every category of \emph{quasispaces} \cite{Dubucquasitopoi} on a \emph{finitary concrete site}, including the category of \emph{(abstract) simplicial complexes}. We show that ($\T\Mod$)-enriched $\calS$-sorted equational theories may be equivalently described in an even more concrete, syntactic manner as \emph{classical $\calS$-sorted equational theories with $\T$-relations}, which consist not only of syntactic equations between terms but also of syntactic \emph{relations} among terms. When $\T\Mod = \Pos$ (resp.~when $\T\Mod = \UltMet$), the ($\T\Mod$)-enriched categories of algebras of classical \emph{single-sorted} equational theories with $\T$-relations are precisely the varieties of ordered algebras of \cite{categoricalviewordered} (resp.~the varieties of ultra-quantitative algebras of \cite{strongly_finitary_quant}). In \S\ref{monotop_subsection}, we show that every cartesian closed category $\V$ whose representable functor $\V(1, -) : \V \to \Set$ is \emph{solid} (in the sense of \cite[Definition 25.10]{AHS}) satisfies our assumptions. In particular, every cartesian closed category $\V$ that is \emph{topological} or even \emph{monotopological} over $\Set$ satisfies our assumptions. When $\V$ is topological over $\Set$, the $\V$-enriched $\calS$-sorted equational theories studied in this paper coincide with those studied in \cite{Initial_algebras} (where $\V$ is assumed to be topological over $\Set$ but not necessarily cartesian closed). 

Finally, in \S\ref{CPO_subsection} we show that $\V = \oCPO$ and $\V = \DCPO$ satisfy our assumptions. Using the notion of \emph{$\omega$-cpo presentation} from \cite[\S 3]{Synth_top_prob} (adapted from the notion of \emph{dcpo presentation} developed in \cite{pres_dcpos}), we also show that ($\oCPO$)-enriched $\calS$-sorted equational theories may be equivalently described in an even more concrete, syntactic manner as \emph{classical $\calS$-sorted equational theories with ($\oCPO$)-relations}. In the single-sorted case, the latter theories are precisely those used to present the varieties of continuous algebras studied in \cite{strongly_finitary_cts}. 

The author wishes to acknowledge Rory Lucyshyn-Wright for an insightful discussion about the content of this paper.

\section{Background and given data}
\label{background}

We assume that the reader has (basic) familiarity with enriched category theory, including the (very) basic theory of weighted colimits, enriched Kan extensions, and enriched monads, as can be obtained from \cite{Dubucbook}, \cite{Kelly}, or \cite[Chapter 6]{Borceux2}; we mainly use the notation and terminology of \cite{Kelly}. 

Recall that a \emph{concrete category over $\Set$} is a category $\V$ equipped with a faithful functor $|-| : \V \to \Set$. We shall usually not distinguish notationally between a morphism $f : X \to Y$ of $\V$ and the underlying function $f = |f| : |X| \to |Y|$ in $\Set$. Given objects $X$ and $Y$ of $\V$ and a function $f : |X| \to |Y|$ in $\Set$, we say that $f$ is \emph{$\V$-admissible} if it lifts (necessarily uniquely) to a morphism $f : X \to Y$ of $\V$. Given an object $X$ of $\V$, we often refer to the set $|X|$ as the \emph{underlying set} of $X$.

\begin{para}[\textbf{Given data}]
\label{given_data}
For the remainder of the paper (unless otherwise stated), we fix the following data:
\begin{itemize}[leftmargin=*]
\item A category $\V$ with the following properties:
\begin{enumerate}
\item $\V$ is complete and cocomplete.
\item $\V$ is cartesian closed.
\item $\V$ is a concrete category over $\Set$, and the associated faithful functor $|-| : \V \to \Set$ is represented by the terminal object $1$ of $\V$, i.e.~$|-| \cong \V(1, -) : \V \to \Set$.
\end{enumerate}
We write the internal hom of the cartesian closed category $\V$ as $[-, -]$. Since $|-| \cong \V(1, -)$, we have $\left|[X, Y]\right| \cong \V(X, Y)$ (the hom-set in $\V$ from $X$ to $Y$) for all objects $X$ and $Y$ of $\V$ \cite[(1.25)]{Kelly}, and we shall assume without loss of generality that $\left|[X, Y]\right| = \V(X, Y)$. Also note that the representable functor $|-| : \V \to \Set$ preserves products, and we shall assume without loss of generality that $|-|$ \emph{strictly} preserves products, so that $|X \times Y| = |X| \times |Y|$ for all $X, Y \in \ob\V$ and $|1|$ is a singleton. Since $\V$ has coproducts, the functor $|-| : \V \to \Set$ has a left adjoint $F : \Set \to \V$ given by $S \mapsto S \cdot 1$ ($S \in \Set$). It then follows that the terminal object $1$ of $\V$ is \emph{discrete}, in the sense that every function $|1| = \{\ast\} \to |X|$ ($X \in \ob\V$) is $\V$-admissible. The discreteness of $1$ is equivalent to $\V$ \emph{admitting constant morphisms}, in the sense that every constant function $f : |X| \to |Y|$ ($X, Y \in \ob\V$) is $\V$-admissible.

The ordinary category $\V$ underlies a $\V$-category with hom-objects $[X, Y]$ ($X, Y \in \ob\V$), and we also denote this $\V$-category by $\V$. Since $1$ is discrete, we deduce from \cite[Proposition 27.18]{AHS} that for all objects $X$ and $Y$ of $\V$, the underlying function of the evaluation morphism $\varepsilon_{X, Y} : X \times [X, Y] \to Y$ (i.e.~the component at $Y$ of the counit of the adjunction $X \times (-) \vdash [X, -] : \V \to \V$) is given by the rule \[ (x, f) \in |X| \times \V(X, Y) \ \mapsto \ f(x) \in |Y|. \] 

We write $\V\CAT$ for the (non-locally-small) category of (possibly large) $\V$-categories and $\V$-functors. In \S\ref{examples_section}, we shall provide many examples of categories $\V$ that satisfy our main assumptions.   
 
\item A set $\calS$, whose elements we call \emph{sorts}. We write $\V^\calS$ for the product category $\prod_\calS \V$, which we may also regard as a $\V$-category, with hom-objects $\V^\calS(X, Y) = \prod_{S \in \calS} \left[X_S, Y_S\right]$ ($X, Y \in \ob\V^\calS$). Note that for all $X, Y \in \ob\V^\calS$, because the representable functor $|-| : \V \to \Set$ preserves products, the underlying set $\left|\V^\calS(X, Y)\right|$ of the hom-object $\V^\calS(X, Y)$ is the hom-set from $X$ to $Y$ in the ordinary category $\V^\calS$. We also write $\Set^\calS$ for the product category $\prod_\calS \Set$. 

We write $\N_\calS$ for the set of all functions $J : \calS \to \N$ with \emph{finite support}, meaning that $J(S) > 0$ for only finitely many $S \in \calS$. We shall see in \ref{N_S_para} that $\N_\calS$ can also be regarded as a (dense) full sub-$\V$-category of $\V^\calS$. 
\end{itemize}
\end{para}

\section{$\V$-enriched $\calS$-sorted equational theories}
\label{theories_section}

We begin by defining the notion of a $\V$-enriched $\calS$-sorted signature, which is adopted from \cite[Definition 3.1.1]{Initial_algebras} (see also \cite[Remark 3.1.3]{Initial_algebras} for related notions of signature, including \cite[Definition 3.1]{Battenfeld_comp} and the \emph{free-form $\N_\calS$-signatures} of \S\ref{diag_section} below).  

\begin{defn}
\label{signature}
A \textbf{$\V$-enriched $\calS$-sorted signature} is a set $\Sigma$ of \emph{operation symbols} equipped with an assignment to each operation symbol $\sigma \in \Sigma$ of \emph{input sorts} $J = J_\sigma \in \N_\calS$, an \emph{output sort} $S \in \calS$, and a \emph{parameter (object)} $P = P_\sigma \in \ob\V$, in which case we say that $\sigma$ has \emph{type} $(J, S, P)$. An operation symbol of $\Sigma$ is \textbf{ordinary} if its parameter object is the terminal object $1$ of $\V$. A \textbf{classical $\calS$-sorted signature} is a ($\V$-enriched) $\calS$-sorted signature whose operation symbols are all ordinary. When $\calS$ is a singleton, we refer to a $\V$-enriched $\calS$-sorted signature as a \textbf{$\V$-enriched single-sorted signature}.       
\end{defn}

\begin{para}[\textbf{The $\V$-functor $(-)^J$}]
\label{prod_not}
For each $J \in \N_\calS$, we write $(-)^J : \V^\calS \to \V$ for the $\V$-functor given by \[ X \mapsto X^J := \prod_{S \in \calS} X_S^{J(S)} \tag{$X \in \V^\calS$}. \] Thus, when $S_1, \ldots, S_m \in \calS$ are the finitely many sorts with $J\left(S_i\right) > 0$ ($1 \leq i \leq m$), we have 
\begin{equation}
\label{prod_not_eq}
X^J \cong X_{S_1}^{J\left(S_1\right)} \times \ldots \times X_{S_m}^{J\left(S_m\right)},
\end{equation}
$\V$-naturally in $X \in \V^\calS$. Once we have shown in \ref{N_S_para} that $\N_\calS$ may be regarded as a full sub-$\V$-category of $\V^\calS$, it will follow that the $\V$-functor $(-)^J : \V^\calS \to \V$ is represented by the object $J$ of $\N_\calS \hookrightarrow \V^\calS$.  
\end{para}

\begin{defn}
\label{sig_algebra}
Let $\Sigma$ be a $\V$-enriched $\calS$-sorted signature. A \textbf{$\Sigma$-algebra} $A$ is an object $A = \left(A_S\right)_{S \in \calS}$ of $\V^\calS$ equipped with, for each $\sigma \in \Sigma$ of type $(J, S, P)$, a $\V$-morphism \[ \sigma^A : P \to \left[A^J, A_S\right], \] which we may also equivalently write as
\[ \sigma^A : P \times A^J \to A_S. \]
When $\sigma$ is ordinary (so that $P = 1$), we simply write
\[ \sigma^A : A^J \to A_S. \] 
We often say that the object $A$ of $\V^\calS$ is the \emph{carrier (object)} of the $\Sigma$-algebra $A$. 

Given $\Sigma$-algebras $A$ and $B$, a \textbf{morphism of $\Sigma$-algebras} $f : A \to B$ is a morphism $f = \left(f_S\right)_{S \in \calS} : A = \left(A_S\right)_{S \in \calS} \to \left(B_S\right)_{S \in \calS} = B$ of $\V^\calS$ that makes either of the following equivalent diagrams commute for each $\sigma \in \Sigma$ of type $(J, S, P)$: 
\[\begin{tikzcd}
	{P \times A^J} &&& {P \times B^J} && P &&& {\left[B^J, B_S\right]} \\
	\\
	{A_S} &&& {B_S} && {\left[A^J, A_S\right]} &&& {\left[A^J, B_S\right]}.
	\arrow["{1 \times f^J}", from=1-1, to=1-4]
	\arrow["{\sigma^A}"', from=1-1, to=3-1]
	\arrow["{\sigma^B}", from=1-4, to=3-4]
	\arrow["{f_S}"', from=3-1, to=3-4]
	\arrow["{\sigma^A}"', from=1-6, to=3-6]
	\arrow["{\sigma^B}", from=1-6, to=1-9]
	\arrow["{\left[f^J, 1\right]}", from=1-9, to=3-9]
	\arrow["{\left[1, f_S\right]}"', from=3-6, to=3-9]
\end{tikzcd}\]
We have an ordinary category $\Sigma\Alg_0$ of $\Sigma$-algebras and their morphisms, which is equipped with a forgetful functor $U^\Sigma_0 : \Sigma\Alg_0 \to \V^\calS$ that sends each $\Sigma$-algebra $A$ to its carrier object.  

The category $\Sigma\Alg_0$ underlies a $\V$-category $\Sigma\Alg$ and the functor $U^\Sigma_0 : \Sigma\Alg \to \V^\calS$ underlies a $\V$-functor $U^\Sigma : \Sigma\Alg \to \V^\calS$. In detail, each hom-object $\Sigma\Alg(A, B)$ ($A, B \in \Sigma\Alg$) is defined as the \emph{pairwise equalizer} \cite[2.1]{Commutants} of the following $\Sigma$-indexed family of parallel pairs in $\V$ (one for each $\sigma \in \Sigma$ of type $\left(J, S, P\right)$):
\begin{equation}
\label{pp_eq}
\V^\calS(A, B) \xrightarrow{\left(P \times (-)^J\right)_{AB}} \left[P \times A^J, P \times B^J\right] \xrightarrow{\left[1, \sigma^B\right]} \left[P \times A^J, B_S\right],
\end{equation}
\begin{equation*}
\V^\calS(A, B) \xrightarrow{\left(\pi_S\right)_{AB}} \left[A_S, B_S\right] \xrightarrow{\left[\sigma^A, 1\right]} \left[P \times A^J, B_S\right],
\end{equation*}
where $\pi_S : \V^\calS \to \V$ is the projection $\V$-functor. Composition in $\Sigma\Alg$ is then defined in the unique way that allows the resulting monomorphisms $U^\Sigma_{AB} : \Sigma\Alg(A, B) \rightarrowtail \V^\calS(A, B)$ ($A, B \in \Sigma\Alg$) to be the structural morphisms of a faithful $\V$-functor $U^\Sigma : \Sigma\Alg \to \V^\calS$ that sends each $\Sigma$-algebra $A$ to its carrier object. Note that each structural morphism $U^\Sigma_{AB}$ ($A, B \in \Sigma\Alg$) is a regular and thus strong monomorphism. We can thus regard $\Sigma\Alg$ as a \emph{$\V$-category over $\V^\calS$}, i.e.~as an object of the slice category $\V\CAT/\V^\calS$.
\end{defn}

\noindent The following definition is essentially \cite[Definition 3.1.5]{Initial_algebras}.

\begin{defn}
\label{ord_sig}
Let $\Sigma$ be a $\V$-enriched $\calS$-sorted signature. The \textbf{underlying classical $\calS$-sorted signature (of $\Sigma$)} is the classical $\calS$-sorted signature $|\Sigma|$ \eqref{signature} defined as follows. Given an operation symbol $\sigma \in \Sigma$ of type $(J, S, P)$, we consider for each element $p \in |P|$ an ordinary operation symbol $\sigma_p$ with input sorts $J$ and output sort $S$. We then write $|\Sigma|$ for the classical $\calS$-sorted signature whose operation symbols are the ordinary operation symbols $\sigma_p$ for all $\sigma \in \Sigma$ and $p \in \left|P_\sigma\right|$ (with the types just described).         
\end{defn}

\begin{para}[\textbf{The underlying $|\Sigma|$-algebra $|A|$ of a $\Sigma$-algebra $A$}]
\label{ord_sig_para}
Let $\Sigma$ be a $\V$-enriched $\calS$-sorted signature with underlying classical $\calS$-sorted signature $|\Sigma|$ \eqref{ord_sig}. Every $\Sigma$-algebra $A$ has an \textbf{underlying $|\Sigma|$-algebra} $|A|$ with the same carrier object $A$ and, for each $\sigma \in \Sigma$ of type $(J, S, P)$ (where $\sigma^A : P \times A^J \to A_S$) and all $p \in |P|$, \[ \sigma_p^{|A|} := \sigma^A(p, -) : A^J \to A_S. \] If we instead write $\sigma^A : P \to \left[A^J, A_S\right]$, then $\sigma_p^{|A|} = \sigma^A(p) : A^J \to A_S$. Note that $\sigma_p^{|A|}$ is $\V$-admissible because $\V$ admits constant morphisms \eqref{given_data}.

In fact, we have a canonical fully faithful $\V$-functor $|-|^\Sigma : \Sigma\Alg \to |\Sigma|\Alg$: given $\Sigma$-algebras $A$ and $B$, the structural isomorphism 
\[ |-|^\Sigma_{AB} : \Sigma\Alg(A, B) \xrightarrow{\sim} |\Sigma|\Alg\left(|A|, |B|\right) \] is uniquely determined by the universal property of $|\Sigma|\Alg\left(|A|, |B|\right)$ as the pairwise equalizer of the family of parallel pairs \eqref{pp_eq} for $|\Sigma|$, $|A|$, and $|B|$. In detail, for each $\sigma \in \Sigma$ of type $(J, S, P)$ and each $p \in |P|$, the structural morphism $U^\Sigma_{A, B} : \Sigma\Alg(A, B) \rightarrowtail \V^\calS(A, B)$ equalizes the following parallel pair of \eqref{pp_eq} for $\sigma_p$, $|A|$, and $|B|$:
\begin{equation*}
\V^\calS(A, B) \xrightarrow{(-)^J_{AB}} \left[A^J, B^J\right] \xrightarrow{\left[1, \sigma_p^{|B|}\right]} \left[A^J, B_S\right]
\end{equation*}
\begin{equation*}
\V^\calS(A, B) \xrightarrow{\left(\pi_S\right)_{AB}} \left[A_S, B_S\right] \xrightarrow{\left[\sigma_p^{|A|}, 1\right]} \left[A^J, B_S\right],
\end{equation*} 
since it equalizes the parallel pair \eqref{pp_eq} for $\sigma$, $A$, and $B$. We thus obtain a unique $\V$-morphism $|-|^\Sigma_{AB} : \Sigma\Alg(A, B) \to |\Sigma|\Alg\left(|A|, |B|\right)$ satisfying $U^{|\Sigma|}_{|A||B|} \circ |-|^\Sigma_{AB} = U^\Sigma_{AB}$, which is a strong monomorphism because $U^\Sigma_{AB}$ is one. One readily verifies that the $\V$-morphism $|-|^\Sigma_{AB}$ is also surjective and thus epimorphic (because $|-| : \V \to \Set$ is faithful), so that $|-|^\Sigma_{AB}$ is an isomorphism. The structural isomorphisms $|-|^\Sigma_{AB}$ ($A, B \in \Sigma\Alg$) then readily assemble into a fully faithful $\V$-functor $|-|^\Sigma : \Sigma\Alg \to |\Sigma|\Alg$.  
\end{para}

\begin{para}[\textbf{$\Sigma$-terms in context and their interpretations}]
\label{syntax_para}
Let $\Sigma$ be a classical $\calS$-sorted signature \eqref{signature}, and let $\Var_S$ ($S \in \calS$) be pairwise disjoint countably infinite sets of variable symbols. An \emph{$\calS$-sorted variable} is an expression of the form $v : S$, for a sort $S \in \calS$ and a variable symbol $v \in \Var_S$. An \emph{$\calS$-sorted variable context}, or simply a \emph{context}, is a finite (possibly empty) unordered list of $\calS$-sorted variables, typically written $v_1 : S_1, \ldots, v_n : S_n$. Given a context $\vbar \equiv v_1 : S_1, \ldots, v_n : S_n$, we recursively define, for each sort $S \in \calS$, the set $\Term(\Sigma; \vbar)_S$ of \emph{$\Sigma$-terms of sort $S$ in context $\vbar$}, by the following clauses:
\begin{enumerate}[leftmargin=*]
\item For each $1 \leq i \leq n$, the expression $[\vbar \vdash v_i : S_i]$ is a $\Sigma$-term of sort $S_i$ in context $\vbar$.
\item Let $\sigma \in \Sigma$ be an operation symbol with input sorts $J \in \N_\calS$ and output sort $T$, let $T_1, \ldots, T_m$ be the finitely many sorts with $J\left(T_j\right) = n_j > 0$ for each $1 \leq j \leq m$, and for each $1 \leq j \leq m$ and each $1 \leq \ell \leq n_j$, let $[\vbar \vdash t_{j, \ell} : T_j]$ be a $\Sigma$-term of sort $T_j$ in context $\vbar$. Then the expression $[\vbar \vdash \sigma\left(t_{1, 1}, \ldots, t_{1, n_1}, \ldots, t_{m, 1}, \ldots, t_{m, n_m}\right) : T]$ is a $\Sigma$-term of sort $T$ in context $\vbar$.    
\end{enumerate} 
Given a context $\vbar$ and a sort $S \in \calS$, a \emph{syntactic $\Sigma$-equation of sort $S$ in context $\vbar$} is an ordered pair of $\Sigma$-terms of sort $S$ in context $\vbar$, and we write such an equation as $[\vbar \vdash s \doteq t : S]$. A \emph{syntactic $\Sigma$-equation in context $\vbar$} is a syntactic $\Sigma$-equation of sort $S$ in context $\vbar$ for some sort $S \in \calS$, while a \emph{syntactic $\Sigma$-equation (in context)} is a syntactic $\Sigma$-equation in context $\vbar$ for some context $\vbar$.  

Let $A$ be a $\Sigma$-algebra. Given a context $\vbar \equiv v_1 : S_1, \ldots, v_n : S_n$, we recursively define, for each sort $S \in \calS$, the \emph{interpretation in $A$} of each $\Sigma$-term $[\vbar \vdash t : S]$ of sort $S$ in context $\vbar$, which shall be a $\V$-morphism
\[ \left[\vbar \vdash t : S\right]^A : A_{S_1} \times \ldots \times A_{S_n} \to A_S, \] by the following clauses:
\begin{enumerate}[leftmargin=*]
\item For each $1 \leq i \leq n$, we define \[ \left[\vbar \vdash v_i : S_i\right]^A : A_{S_1} \times \ldots \times A_{S_n} \to A_{S_i} \] to be the product projection onto $A_{S_i}$.

\item Let $\sigma \in \Sigma$ be an operation symbol with input sorts $J \in \N_\calS$ and output sort $T$, let $T_1, \ldots, T_m$ be the finitely many sorts with $J\left(T_j\right) = n_j > 0$ for each $1 \leq j \leq m$, and for each $1 \leq j \leq m$ and each $1 \leq \ell \leq n_j$, let $[\vbar \vdash t_{j, \ell} : T_j]$ be a $\Sigma$-term of sort $T_j$ in context $\vbar$. Then we define 
\[ [\vbar \vdash \sigma\left(t_{1, 1}, \ldots, t_{1, n_1}, \ldots, t_{m, 1}, \ldots, t_{m, n_m}\right) : T]^A : A_{S_1} \times \ldots \times A_{S_n} \to A_T \] to be the following composite $\V$-morphism:
\[ A_{S_1} \times \ldots \times A_{S_n} \xrightarrow{\left\langle \left[\vbar \vdash t_{1, 1} : T_1\right]^A, \ldots, \left[\vbar \vdash t_{m, n_m} : T_m\right]^A\right\rangle} A^J \xrightarrow{\sigma^A} A_T. \]
\end{enumerate} 
We say that a $\Sigma$-algebra $A$ \emph{satisfies} a syntactic $\Sigma$-equation $[\vbar \vdash s \doteq t : S]$ of sort $S$ in context $\vbar$ if
\begin{equation}
\label{sat_synt}
\left[\vbar \vdash s : S\right]^A = \left[\vbar \vdash t : S\right]^A : A_{S_1} \times \ldots \times A_{S_n} \to A_S.
\end{equation} 
Given a morphism of $\Sigma$-algebras $f : A \to B$ and a sort $S \in \calS$, one readily verifies by induction on the $\Sigma$-term $[\vbar \vdash t : S]$ of sort $S$ in context $\vbar$ that the following diagram commutes:
\begin{equation}
\label{synt_to_nat_square}
\begin{tikzcd}
	{A_{S_1} \times \ldots \times A_{S_n}} &&& {B_{S_1} \times \ldots \times B_{S_n}} \\
	\\
	{A_S} &&& {B_S}.
	\arrow["{f_{S_1} \times \ldots \times f_{S_n}}", from=1-1, to=1-4]
	\arrow["{\left[\vbar \vdash t : S\right]^A}"', from=1-1, to=3-1]
	\arrow["{\left[\vbar \vdash t : S\right]^B}", from=1-4, to=3-4]
	\arrow["{f_S}"', from=3-1, to=3-4]
\end{tikzcd}
\end{equation}
For brevity, we shall often write $t^A$ instead of $\left[\vbar \vdash t : S\right]^A$. 
\end{para}

\noindent We now come to the central definition of this section.

\begin{defn}
\label{enriched_theory}
A \textbf{$\V$-enriched $\calS$-sorted equational theory} is a pair $\calT = (\Sigma, \calE)$ consisting of a $\V$-enriched $\calS$-sorted signature $\Sigma$ and a set $\calE$ of syntactic $|\Sigma|$-equations in context \eqref{syntax_para}, where $|\Sigma|$ is the underlying classical $\calS$-sorted signature of $\Sigma$ \eqref{ord_sig}. A $\Sigma$-algebra $A$ is a \textbf{$\calT$-algebra} if the underlying $|\Sigma|$-algebra $|A|$ \eqref{ord_sig_para} satisfies each syntactic $|\Sigma|$-equation in context in $\calE$ \eqref{sat_synt}. We write $\calT\Alg$ for the full sub-$\V$-category of the $\V$-category $\Sigma\Alg$ consisting of the $\calT$-algebras, so that $\calT\Alg$ is equipped with the faithful $\V$-functor $U^\calT : \calT\Alg \to \V^\calS$ obtained by restricting the faithful $\V$-functor $U^\Sigma : \Sigma\Alg \to \V^\calS$. We can thus regard $\calT\Alg$ as a $\V$-category over $\V^\calS$. When $\calS$ is a singleton, we instead say \textbf{$\V$-enriched single-sorted equational theory}.
\end{defn}

\noindent In \S\ref{monotop_subsection}, we compare the $\V$-enriched $\calS$-sorted equational theories of Definition \ref{enriched_theory} with those studied in \cite{Initial_algebras} (where $\V$ is assumed to be \emph{topological over $\Set$} but not necessarily cartesian closed). In \S\ref{examples_section} we shall also indicate many examples of $\V$-enriched $\calS$-sorted equational theories in the literature.

\section{Strongly finitary $\V$-monads on $\V^\calS$}
\label{sf_section}

In this section, we introduce strongly finitary $\V$-monads on $\V^\calS$. 

\begin{para}[\textbf{The subcategory of arities $\N_\calS \hookrightarrow \V^\calS$}]
\label{N_S_para}
For each natural number $n \in \N$, we write $n \cdot 1$ for the $n^{\mathsf{th}}$ copower of the terminal object $1$ in $\V$. Since $|-| \cong \V(1, -) : \V \to \Set$, it follows that $\V$-morphisms $n \cdot 1 \to X$ ($n \in \N, X \in \ob\V$) are in bijective correspondence with $n$-tuples of elements of the underlying set $|X|$. 

We write $\N_\V \hookrightarrow \V$ for the (small) full sub-$\V$-category consisting of the objects $n \cdot 1$ ($n \in \N$). For each $n \in \N$ and each $X \in \ob\V$, note that $[n \cdot 1, X] \cong [1, X]^n \cong X^n$, the $n^{\mathsf{th}}$ power of $X$. We also write $\N_\V^\calS$ for the (small) full sub-$\V$-category $\prod_\calS \N_\V \hookrightarrow \prod_\calS \V = \V^\calS$. Finally, we write $\N_\calS$ for the (small) full sub-$\V$-category of $\N_\V^\calS$ (and hence of $\V^\calS$) consisting of the objects $\left(n_S \cdot 1\right)_{S \in \calS}$ of $\N_\V^\calS$ such that $n_S = 0$ for all but finitely many $S \in \calS$. It is in this way that we can regard the set $\N_\calS$ of functions $\calS \to \N$ with finite support as a small full sub-$\V$-category of $\V^\calS$. We shall often denote an object of $\N_\calS$ by $J = \left(n_{S_1}, \ldots, n_{S_m}\right)$, where $S_1, \ldots, S_m$ are the finitely many sorts with $n_{S_i} > 0$ ($1 \leq i \leq m$). Note that we have
\begin{equation}
\label{repr_eq} 
\V^\calS(J, X) = \prod_{S \in \calS}\left[n_S \cdot 1, X_S\right] \cong \prod_{S \in \calS} X_S^{n_S} = X^J
\end{equation}
$\V$-naturally in $X \in \V^\calS$, so that the object $J$ of $\N_\calS$ represents the $\V$-functor $(-)^J : \V^\calS \to \V$ of \ref{prod_not}. Thus, we have $\left|X^J\right| \cong \left|\V^\calS(J, X)\right|$, so we can and shall assume that the underlying set $\left|X^J\right|$ is the set of morphisms from $J$ to $X$ in $\V^\calS$. 

If $J = \left(n_{S_1}, \ldots, n_{S_m}\right) = \left(n_1, \ldots, n_m\right)$ and $X$ is an object of $\V^\calS$, then note that a morphism $f : J \to X$ of $\V^\calS$ is completely determined by, for each $1 \leq j \leq m$, an $n_{j}$-tuple of elements of the underlying set $\left|X_{S_j}\right|$, which we write as $\left(f_{S_j}(1), \ldots, f_{S_j}\left(n_j\right)\right)$. We shall then sometimes denote the morphism $f : J \to X$ of $\V^\calS$ by
\begin{equation}
\label{J_eq}
f = \left(f_{S_1}(1), \ldots, f_{S_1}\left(n_1\right), \ldots, f_{S_m}(1), \ldots, f_{S_m}\left(n_m\right)\right) : J \to X,
\end{equation} 
or more succinctly by $f = \left(f_{S_1}(1), \ldots, f_{S_m}\left(n_m\right)\right) : J \to X$. 

By \cite[6.2]{Initial_algebras}, the small full sub-$\V$-category $\N_\calS \hookrightarrow \V^\calS$ is \emph{dense} (in the enriched sense; see \cite[Chapter 5]{Kelly}), and thus it is a small \emph{subcategory of arities} in the sense of \cite[Definition 3.1]{Pres} and \cite[\S 3]{Struct}.
\end{para}

\noindent The following terminology was introduced by Kelly and Lack \cite[\S 3]{KellyLackstronglyfinitary} in the case where $\calS$ is a singleton (and $\V(1, -) : \V \to \Set$ is not assumed to be faithful). 

\begin{defn}
\label{sf_def}
A $\V$-endofunctor $T : \V^\calS \to \V^\calS$ is \textbf{strongly finitary} if it is the ($\V$-enriched) left Kan extension of its restriction along $\N_\calS \hookrightarrow \V^\calS$. 
A $\V$-monad $\T = (T, \eta, \mu)$ on $\V^\calS$ is \textbf{strongly finitary} if the $\V$-endofunctor $T : \V^\calS \to \V^\calS$ is strongly finitary. We write $\Mnd\left(\V^\calS\right)$ for the category of all $\V$-monads on $\V^\calS$ and $\Mnd_\sff\left(\V^\calS\right) \hookrightarrow \Mnd\left(\V^\calS\right)$ for the full subcategory consisting of the strongly finitary $\V$-monads. Given a $\V$-monad $\T$ on $\V^\calS$, we write $\T\Alg$ for the $\V$-category of $\T$-algebras and $U^\T : \T\Alg \to \V^\calS$ for the forgetful $\V$-functor (defined in \cite[II.1]{Dubucbook}), so that $\T\Alg$ may be regarded as a $\V$-category over $\V^\calS$. 
\end{defn}

\begin{para}[\textbf{The subcategory of arities $\N_\calS \hookrightarrow \V^\calS$ is eleutheric}]
\label{eleuth_para}
Writing $j : \N_\calS \hookrightarrow \V^\calS$ for the inclusion, we write $\Phi_{\N_\calS}$ for the class of small weights $\V^\calS(j-, X) : \N_\calS^\op \to \V$ ($X \in \ob\V^\calS$).
Since $\V$ is cartesian closed, we deduce from \cite[Theorem 6.8]{Initial_algebras} that the subcategory of arities $\N_\calS \hookrightarrow \V^\calS$ is \emph{eleutheric} \cite[Definition 3.3]{Pres}, meaning that each representable $\V$-functor $\V^\calS(J, -) : \V^\calS \to \V$ ($J \in \N_\calS$) preserves weighted $\Phi_{\N_\calS}$-colimits, or equivalently preserves ($\V$-enriched) left Kan extensions along $j : \N_\calS \hookrightarrow \V^\calS$.\footnote{\cite[Definition 3.3]{Pres} also requires that $\V^\calS$ have weighted $\Phi_{\N_\calS}$-colimits, but this is true in the present context because $\N_\calS$ is small and $\V^\calS$ is cocomplete (as a $\V$-category), since $\V$ is cocomplete.} 

We then deduce from \cite[Proposition 4.2]{Pres} that a $\V$-endofunctor $T : \V^\calS \to \V^\calS$ is strongly finitary in the sense of Definition \ref{sf_def} iff it is \emph{$\N_\calS$-ary} in the sense of \cite[Definition 4.1]{Pres}, meaning that it preserves weighted $\Phi_{\N_\calS}$-colimits, or equivalently preserves left Kan extensions along $j : \N_\calS \hookrightarrow \V^\calS$. As a consequence, the strongly finitary $\V$-endofunctors of $\V^\calS$ are closed under composition, and a $\V$-monad on $\V^\calS$ is strongly finitary iff it is \emph{$\N_\calS$-ary}, meaning that its underlying $\V$-endofunctor is $\N_\calS$-ary, i.e.~preserves left Kan extensions along $j : \N_\calS \hookrightarrow \V^\calS$. 
\end{para}

\noindent We conclude this section with the following central definition:

\begin{defn}
\label{monad_pres_def}
A $\V$-enriched $\calS$-sorted equational theory $\calT$ \textbf{presents} a $\V$-monad $\T$ on $\V^\calS$ if $\calT\Alg \cong \T\Alg$ in $\V\CAT/\V^\calS$. 
\end{defn}

\section{$\V$-enriched $\calS$-sorted equational theories present strongly finitary $\V$-monads on $\V^\calS$}
\label{diag_section}

Our first objective is to establish, using results of \cite{EP}, that every $\V$-enriched $\calS$-sorted equational theory presents a strongly finitary $\V$-monad on $\V^\calS$. To prove this, we shall first recall some relevant notions from \cite{EP}, specialized to the small subcategory of arities $\N_\calS \hookrightarrow \V^\calS$ \eqref{N_S_para}.
Recall that the $\V$-category $\V^\calS$ is \emph{tensored} and \emph{cotensored} \cite[\S 3.7]{Kelly}, with tensors and cotensors in $\V^\calS$ being formed \emph{pointwise} \cite[\S 3.3]{Kelly}: given an object $V$ of $\V$ and an object $X$ of $\V^\calS$, the tensor $V \tensor X$ in $\V^\calS$ is given by
\begin{equation}
\label{tensor_eq}
V \tensor X = \left(V \times X_S\right)_{S \in \calS},
\end{equation}
while the cotensor $[V, X]$ in $\V^\calS$ is given by 
\begin{equation}
\label{cotensor_eq}
[V, X] = \left(\left[V, X_S\right]\right)_{S \in \calS}.
\end{equation} 

\begin{para}[\textbf{Free-form $\N_\calS$-signatures and their algebras}]
\label{EP_sig_para}
A \emph{free-form $\N_\calS$-signature} \cite[Definition 5.1]{EP} is a set $\scrS$ of operation symbols equipped with an assignment to each operation symbol $\sigma \in \scrS$ of an \emph{arity} $J = J_\sigma \in \N_\calS$ and a \emph{parameter (object)} $P = P_\sigma \in \ob\V^\calS$.\footnote{We emphasize that here, the parameter object is an object of $\V^\calS$ rather than $\V$ (cf.~Definition \ref{signature}).} An \emph{$\scrS$-algebra} $A$ \cite[Definition 5.2]{EP} is an object $A$ of $\V^\calS$ equipped with, for each $\sigma \in \scrS$ of arity $J$ and parameter $P$, a $\V^\calS$-morphism
\[ \sigma^A : \V^\calS\left(J, A\right) \tensor P \to A, \] which, by \eqref{repr_eq}, may be equivalently written as
\[ \sigma^A : A^J \tensor P \to A. \]
Then by \eqref{tensor_eq}, the $\V^\calS$-morphism $\sigma^A : A^J \tensor P \to A$ is completely determined by, for each sort $S \in \calS$, a $\V$-morphism
\begin{equation}
\label{EP_sig_eq}
\sigma^A_S : A^J \times P_S \to A_S.
\end{equation} 
Given $\scrS$-algebras $A$ and $B$, a \emph{morphism of $\scrS$-algebras $f : A \to B$} \cite[Definition 5.2]{EP} is a morphism $f : A \to B$ of $\V^\calS$ that makes the following diagram commute for each $\sigma \in \scrS$ of arity $J$ and parameter $P$:
\[\begin{tikzcd}
	{A^J \tensor P} &&& {B^J \tensor P} \\
	\\
	{A} &&& {B}.
	\arrow["{f^J \tensor 1_P}", from=1-1, to=1-4]
	\arrow["{\sigma^A}"', from=1-1, to=3-1]
	\arrow["{\sigma^B}", from=1-4, to=3-4]
	\arrow["{f}"', from=3-1, to=3-4]
\end{tikzcd}\] 
We then have an ordinary category $\scrS\Alg_0$ of $\scrS$-algebras that (as in \cite[Definition 5.2]{EP}) underlies a $\V$-category $\scrS\Alg$, which is equipped with a faithful $\V$-functor $U^\scrS : \scrS\Alg \to \V^\calS$ that sends each $\scrS$-algebra $A$ to the object $A \in \ob\V^\calS$. We may thus regard $\scrS\Alg$ as a $\V$-category over $\V^\calS$. 
\end{para}

\begin{para}[\textbf{Diagrammatic $\N_\calS$-presentations and their algebras}]
\label{EP_eq_para}
Given a free-form $\N_\calS$-signature $\scrS$, a \emph{natural $\scrS$-operation}\footnote{In \cite[Definition 5.10]{EP} the concept of \emph{$\V$-natural $\scrS$-operation} is defined, but $\V$-naturality reduces to ordinary naturality in the present context because $\V(1, -) : \V \to \Set$ is faithful.} \cite[Definition 5.10]{EP} is a natural transformation $\omega : \V^\calS\left(J, U^\scrS-\right) \tensor P \Longrightarrow U^\scrS : \scrS\Alg \to \V^\calS$ with specified \emph{arity} $J \in \N_\calS$ and \emph{parameter (object)} $P \in \ob\V^\calS$. Explicitly, a natural $\scrS$-operation $\omega$ consists of morphisms $\omega_A : A^J \tensor P \to A$ ($A \in \ob\scrS\Alg$) of $\V^\calS$ such that the following diagram commutes for each morphism $f : A \to B$ of $\scrS\Alg$:
\[\begin{tikzcd}
	{A^J \tensor P} &&& {B^J \tensor P} \\
	\\
	{A} &&& {B}.
	\arrow["{f^J \tensor 1_P}", from=1-1, to=1-4]
	\arrow["{\omega_A}"', from=1-1, to=3-1]
	\arrow["{\omega_B}", from=1-4, to=3-4]
	\arrow["{f}"', from=3-1, to=3-4]
\end{tikzcd}\]  
A \emph{diagrammatic $\scrS$-equation} \cite[Definition 5.10]{EP}, written $\omega \doteq \nu$, is a pair of natural $\scrS$-operations $\omega, \nu : \V^\calS\left(J, U^\scrS-\right) \tensor P \Longrightarrow U^\scrS$ with the same arity $J$ and parameter $P$. An $\scrS$-algebra $A$ \emph{satisfies} a diagrammatic $\scrS$-equation $\omega \doteq \nu$ if $\omega_A = \nu_A : A^J \tensor P \to A$. Finally, a \emph{diagrammatic $\N_\calS$-presentation} \cite[Definition 5.12]{EP} is a pair $\scrT = (\scrS, \scrE)$ consisting of a free-form $\N_\calS$-signature $\scrS$ and a set $\scrE$ of diagrammatic $\scrS$-equations. We write $\scrT\Alg$ for the full sub-$\V$-category of $\scrS\Alg$ consisting of the \emph{$\scrT$-algebras}, i.e.~the $\scrS$-algebras that satisfy all of the diagrammatic $\scrS$-equations in $\scrE$. The forgetful $\V$-functor $U^\scrS : \scrS\Alg \to \V^\calS$ restricts to a forgetful $\V$-functor $U^\scrT : \scrT\Alg \to \V^\calS$, and we may thus regard $\scrT\Alg$ as a $\V$-category over $\V^\calS$.     
\end{para} 

\noindent The following result establishes that each $\V$-enriched $\calS$-sorted equational theory has the same ``expressive power'' as some diagrammatic $\N_\calS$-presentation.

\begin{prop}
\label{equiv_presentations_prop1}
\
\begin{enumerate}[leftmargin=*]
\item For each $\V$-enriched $\calS$-sorted signature $\Sigma$ \eqref{signature}, there is a free-form $\N_\calS$-signature $\scrS_\Sigma$ \eqref{EP_sig_para} such that $\Sigma\Alg \cong \scrS_\Sigma\Alg$ in $\VCAT/\V^\calS$. 

\item For each $\V$-enriched $\calS$-sorted signature $\Sigma$ \eqref{signature} and each syntactic $|\Sigma|$-equation $[\vbar \vdash s \doteq t : S]$ \eqref{syntax_para}, there is a diagrammatic $\scrS_\Sigma$-equation $\omega_s \doteq \omega_t$ with the property that for every $\Sigma$-algebra $A$, the underlying $|\Sigma|$-algebra $|A|$ \eqref{ord_sig_para} satisfies $[\vbar \vdash s \doteq t : S]$ \eqref{sat_synt} iff the $\scrS_\Sigma$-algebra corresponding to $A$ by (1) satisfies $\omega_s \doteq \omega_t$. 
 
\item For each $\V$-enriched $\calS$-sorted equational theory $\calT$, there is a diagrammatic $\N_\calS$-presentation $\scrT_\calT$ such that $\calT\Alg \cong \scrT_\calT\Alg$ in $\VCAT/\V^\calS$.
\end{enumerate} 
\end{prop}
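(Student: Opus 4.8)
The plan is to construct the free-form $\N_\calS$-signature $\scrS_\Sigma$ from $\Sigma$ by translating each parameter object $P_\sigma \in \ob\V$ of a $\V$-enriched operation symbol $\sigma$ of type $(J, S, P)$ into a parameter object of $\V^\calS$ concentrated in the single sort $S$, namely the object $P \cdot \y_S$ (or equivalently the object of $\V^\calS$ whose $S$-component is $P$ and whose other components are the initial object $\emptyset$). Under this translation, an $\scrS_\Sigma$-structure map $\sigma^A : A^J \tensor (P \cdot \y_S) \to A$ in $\V^\calS$ has, by the pointwise formula \eqref{EP_sig_eq} for tensors and the fact that $A^J \times \emptyset = \emptyset$ with $\emptyset$ initial, all components trivial except the $S$-component $\sigma^A_S : A^J \times P \to A_S$, which is precisely the datum of a $\Sigma$-algebra structure map (rewritten via the adjunction $A^J \times (-) \dashv [A^J, -]$ as $\sigma^A : P \to [A^J, A_S]$). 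So the assignment is essentially tautological; the work is to check that a morphism of $\V^\calS$ is an $\scrS_\Sigma$-algebra morphism iff it is a $\Sigma$-algebra morphism — this follows by comparing the two defining commuting squares and again using that the components outside sort $S$ of the parameter tensors are initial, hence impose no condition — and then to promote this bijection on objects and morphisms to a $\V$-functor isomorphism over $\V^\calS$. Since both $U^\Sigma : \Sigma\Alg \to \V^\calS$ and $U^{\scrS_\Sigma} : \scrS_\Sigma\Alg \to \V^\calS$ are faithful with hom-objects defined as the same pairwise equalizers in $\V$ (compare \eqref{pp_eq} with the equalizer description of $\scrS\Alg$ hom-objects in \ref{EP_sig_para}), the isomorphism of underlying ordinary categories lifts uniquely to an isomorphism of $\V$-categories over $\V^\calS$, giving (1).

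For (2), I would work by structural induction on $\Sigma$-terms. Given a $\Sigma$-term $[\vbar \vdash t : S]$ in a context $\vbar \equiv v_1 : S_1, \ldots, v_n : S_n$, I want to build a natural $\scrS_\Sigma$-operation $\omega_t : \V^\calS(K, U^{\scrS_\Sigma}-) \tensor P_{\vbar} \Longrightarrow U^{\scrS_\Sigma}$ of an appropriate arity $K \in \N_\calS$ and parameter $P_{\vbar} \in \ob\V^\calS$. The natural choice of arity $K$ is the object of $\N_\calS$ determined by the context $\vbar$ (the function $\calS \to \N$ counting how many variables of each sort occur in $\vbar$), so that $A^K \cong A_{S_1} \times \ldots \times A_{S_n}$, and the parameter $P_\vbar$ should record the tuple of parameter objects of the operation symbols occurring in $t$; a clean way to handle this is to let $\omega_t$ have parameter given by a suitable finite product (over the subterm occurrences) of the parameter objects $P_\sigma$, placed in the sort $S$, so that an element of $|P_\vbar|$ is exactly a choice of $p_\sigma \in |P_\sigma|$ for each relevant occurrence. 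Then the inductive clauses mirror \ref{syntax_para}: a variable gives a projection, and an application $\sigma(t_{1,1}, \ldots, t_{m,n_m})$ composes the already-constructed $\omega_{t_{j,\ell}}$ with the $\scrS_\Sigma$-structure map $\sigma^A$. Naturality of each $\omega_t$ follows from naturality of the $\V^\calS$-functor $U^{\scrS_\Sigma}$-structure maps together with \eqref{synt_to_nat_square}. Having built $\omega_s$ and $\omega_t$ with the same arity $K$ and the same parameter $P$, the diagrammatic $\scrS_\Sigma$-equation $\omega_s \doteq \omega_t$ holds for the $\scrS_\Sigma$-algebra corresponding to $A$ iff $(\omega_s)_A = (\omega_t)_A : A^K \tensor P \to A$, and unwinding the tensor pointwise (again only the $S$-component is nontrivial) and chasing an element $p \in |P|$ — which is legitimate since $\V$ is concrete and $|-| = \V(1,-)$ is faithful — recovers exactly the equation $[\vbar \vdash s : S]^{|A|} = [\vbar \vdash t : S]^{|A|}$ of \eqref{sat_synt}, using that the interpretation $[\vbar \vdash t : S]^{|A|}$ built in \ref{syntax_para} from the maps $\sigma_p^{|A|} = \sigma^A(p, -)$ agrees termwise with the $p$-slice of $(\omega_t)_A$. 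This element-chasing identification of the two interpretations is, I expect, the main bookkeeping obstacle: one must set up the parameter object $P_\vbar$ and the $\V$-naturality of $\omega_t$ carefully enough that the slicing at $p$ commutes with composition in the inductive step.

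Finally, (3) is immediate from (1) and (2): given a $\V$-enriched $\calS$-sorted equational theory $\calT = (\Sigma, \calE)$, let $\scrT_\calT := (\scrS_\Sigma, \scrE)$, where $\scrE$ consists of the diagrammatic $\scrS_\Sigma$-equations $\omega_s \doteq \omega_t$ produced by (2) for each $[\vbar \vdash s \doteq t : S] \in \calE$. By (1) we have an isomorphism $\Sigma\Alg \cong \scrS_\Sigma\Alg$ over $\V^\calS$, and by (2) this isomorphism identifies the full sub-$\V$-category of $\calT$-algebras (those $\Sigma$-algebras $A$ whose underlying $|\Sigma|$-algebra $|A|$ satisfies every equation in $\calE$) with the full sub-$\V$-category of $\scrT_\calT$-algebras (those $\scrS_\Sigma$-algebras satisfying every equation in $\scrE$); since a full sub-$\V$-category is determined by its object-set, and $U^\calT$ and $U^{\scrT_\calT}$ are the evident restrictions of $U^\Sigma$ and $U^{\scrS_\Sigma}$, the isomorphism restricts to an isomorphism $\calT\Alg \cong \scrT_\calT\Alg$ in $\V\CAT/\V^\calS$.
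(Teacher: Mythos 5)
Your part (1) is correct and is essentially the right construction: placing $P_\sigma$ in the single component $S$ of an object of $\V^\calS$ and padding the remaining components with the initial object makes the components of $\sigma^A : A^J \tensor P \to A$ away from sort $S$ carry no data (since $A^J \times \emptyset \cong \emptyset$ in a cartesian closed category), and the comparison of morphisms and of the equalizer-defined hom-objects goes through as you indicate. Part (3) is also fine granted (1) and (2). For what it is worth, the paper disposes of all three parts purely by citation to \cite{Initial_algebras}, so a self-contained argument like yours is a legitimately different route; but as written it has a genuine flaw in (2).

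The gap is your choice of parameter object for $\omega_t$. The equation $[\vbar \vdash s \doteq t : S]$ is a syntactic $|\Sigma|$-equation: every operation occurrence in $s$ and $t$ is already a symbol $\sigma_p$ with a \emph{fixed} element $p \in |P_\sigma|$, and its interpretation $\sigma_p^{|A|} = \sigma^A(p,-)$ is a single morphism, so there is nothing left to parametrize. By instead taking the parameter of $\omega_t$ to be a product of the objects $P_\sigma$ over the occurrences, you (a) give $\omega_s$ and $\omega_t$ different parameter objects in general (for $[v : S \vdash \sigma_p(v) \doteq v : S]$ the left side would have parameter $P_\sigma$ and the right side the empty product $1$), so $\omega_s \doteq \omega_t$ is not even a well-formed diagrammatic equation; and (b) even after forcing a common parameter, the condition $(\omega_s)_A = (\omega_t)_A$ quantifies over \emph{all} elements of that parameter, hence asserts the equation for every re-decoration of the occurrences, which is strictly stronger than \eqref{sat_synt} --- in the example it would force $\sigma_q^{|A|} = \id$ for every $q \in |P_\sigma|$, not just for the given $p$. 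So the required ``iff'' fails in the direction from satisfaction of the syntactic equation to satisfaction of the diagrammatic one. The repair is to give both $\omega_s$ and $\omega_t$ the trivial parameter (terminal in sort $S$, initial elsewhere) and to define $(\omega_t)_A$ directly as the already-determined interpretation $t^{|A|} : A^K \to A_S$, whose naturality is exactly \eqref{synt_to_nat_square}; then $\omega_s \doteq \omega_t$ holds at $A$ precisely when $s^{|A|} = t^{|A|}$, which is the stated equivalence.
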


\begin{proof}
(3) readily follows from (1) and (2), and we obtain (1) from \cite[Proposition 6.5(1)]{Initial_algebras}. We obtain (2) by combining \cite[Proposition 4.1.15(3)]{Initial_algebras} with \cite[Proposition 6.5(2)]{Initial_algebras}. 
\end{proof}

\noindent To prove the main result of this section (Theorem \ref{th_to_mnd}), we also require the following lemma.

\begin{lem}
\label{bounded_lem}
For each $J \in \N_\calS$, the representable $\V$-functor $\V^\calS(J, -) \cong (-)^J : \V^\calS \to \V$ \eqref{prod_not} preserves (conical) filtered colimits.
\end{lem}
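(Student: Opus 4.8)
The plan is to reduce the claim about the representable $\V$-functor $\V^\calS(J,-)\cong (-)^J$ to the well-known fact that filtered colimits commute with finite limits in $\Set$ (and hence in $\V$, where we must be slightly careful). First I would recall from \ref{prod_not} that for $J=(n_{S_1},\ldots,n_{S_m})$ we have $X^J\cong X_{S_1}^{n_{S_1}}\times\cdots\times X_{S_m}^{n_{S_m}}$, a finite product of finite powers of the components $X_{S_i}$; since $(-)^J$ is a composite of the projection $\V$-functors $\pi_{S_i}:\V^\calS\to\V$, the finite-power $\V$-functors $(-)^{n}:\V\to\V$, and the finite-product $\V$-functor $\V\times\cdots\times\V\to\V$, it suffices to show that each of these preserves conical filtered colimits. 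The projections $\pi_S:\V^\calS\to\V$ preserve all colimits (colimits in $\V^\calS$ are computed pointwise, as $\V^\calS=\prod_\calS\V$), so the real content is that the finite-product $\V$-functor $\V^n\to\V$ preserves conical filtered colimits, equivalently that filtered colimits commute with finite products in $\V$.

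The hard part — really the only substantive point — is that we are working $\V$-enriched, so ``conical filtered colimit'' means a colimit weighted by the constant weight $\Delta 1$ over a filtered ordinary category, and we must know that such colimits in $\V$ are created by the faithful representable functor $|-|\cong\V(1,-):\V\to\Set$ and that they are preserved by finite products. My approach to this: since $\V$ is cocomplete as a $\V$-category and $1$ is discrete (so $\V(1,-)$ has a left adjoint $F$ and, being a right adjoint's... no — rather, $\V(1,-)$ preserves limits and, crucially for us, the conical colimit in $\V$ of a filtered diagram is computed as in $\Set$ after applying $|-|$, because $|-|$ is faithful and $\V$ is ``concrete cocomplete''). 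Concretely, I would either cite the relevant fact from \cite{Kelly} that a cocomplete $\V$-category has underlying conical colimits computed by the conical colimit in the underlying ordinary category, together with the observation that the ordinary category $\V$ has filtered colimits preserved by the faithful, product-preserving $|-|$; or, more self-containedly, argue pointwise on underlying sets. Once everything is moved into $\Set$ via $|-|$, the statement becomes exactly: filtered colimits commute with finite products (equivalently finite limits) in $\Set$, which is classical.

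So the key steps, in order, are: (i) observe colimits in $\V^\calS$ are pointwise, so each projection $\pi_S:\V^\calS\to\V$ preserves conical filtered colimits, reducing to $\V$; (ii) factor $(-)^J$ through projections followed by a finite product of finite powers, so it suffices to show the finite-product $\V$-functor $\V^n\to\V$ preserves conical filtered colimits; (iii) observe that $|-|:\V\to\Set$ is faithful, preserves finite products strictly (by \ref{given_data}), and detects/creates conical filtered colimits (using that $\V$ is cocomplete and $1$ is discrete, so underlying conical colimits in $\V$ agree with colimits in the underlying ordinary category $\V$, which in turn, being computed as in the presheaf or concrete setting... — here I would lean on the fact that $|-|\cong\V(1,-)$ together with cocompleteness of $\V$ and that the colimit is conical filtered); (iv) conclude from the classical fact that filtered colimits commute with finite products in $\Set$. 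The only potential subtlety I anticipate is justifying step (iii) cleanly — i.e., that $|-|$ preserves filtered colimits in the ordinary category $\V$ and that these compute the enriched conical filtered colimits — but this follows from standard enriched-category-theory facts about underlying categories of cocomplete $\V$-categories (see \cite{Kelly}, \S 3.3 and \S 3.8), so I would simply invoke those.
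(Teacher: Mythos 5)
Your decomposition of $(-)^J$ into projections, finite powers, and the finite-product functor $\V^m \to \V$ is exactly the paper's, and steps (i), (ii), and (iv) are fine. The gap is precisely the step (iii) you flagged and then waved away: under the standing hypotheses of \ref{given_data}, the representable functor $|-| \cong \V(1,-) : \V \to \Set$ need \emph{not} preserve filtered colimits, so the reduction to $\Set$ fails. The facts from \cite[\S 3.3, \S 3.8]{Kelly} only give you that enriched conical colimits in the tensored $\V$-category $\V$ agree with ordinary colimits in the underlying ordinary category; they say nothing about those colimits being computed on underlying sets. A concrete counterexample occurs in one of the paper's own headline settings, $\V = \DCPO$ (or $\oCPO$): the chain of finite posets $\{0 < 1 < \cdots < n\}$ under inclusion has colimit $\omega + 1$ in $\DCPO$ (the top element must be freely adjoined as the supremum of the chain), whereas the colimit of the underlying sets is $\N$, so the comparison map $\colim |X_n| \to |\colim X_n|$ is not surjective. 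Hence $|-|$ does not ``create'' or even preserve filtered colimits here, and your argument proves nothing for $\DCPO$ or $\oCPO$.

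The correct way to close the gap --- and what the paper does --- is to avoid $\Set$ entirely and use cartesian closedness directly: since each $X \times (-) : \V \to \V$ is a left adjoint, the binary (hence any finite) product functor preserves all colimits in each variable separately, and a functor of finitely many variables that preserves filtered colimits in each variable separately preserves them jointly (the standard argument via finality of the diagonal $\K \to \K^m$ for filtered $\K$); this is \cite[3.8]{Kellystr}. The same reasoning handles the finite-power functors $(-)^n : \V \to \V$. If you want to keep your step (iii), you would need the additional hypothesis that $1$ is finitely presentable in $\V$ (equivalently, that $|-|$ is finitary), which holds for the topological and relational-Horn-theory examples but is not assumed in \ref{given_data} and fails for $\DCPO$ and $\oCPO$.
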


\begin{proof}
With $J = \left(n_{S_1}, \ldots, n_{S_m}\right)$ and $n_i := n_{S_i}$ ($1 \leq i \leq m$), by \eqref{prod_not_eq} and \eqref{repr_eq} and we have \[ \V^\calS(J, X) \cong X^J \cong X_{S_1}^{n_1} \times \ldots \times X_{S_m}^{n_m}, \] $\V$-naturally in $X \in \V^\calS$. So $\V^\calS(J, -) : \V^\calS \to \V$ is isomorphic to the composite $\V$-functor
\[ \V^\calS \xrightarrow{\left\langle \pi_{S_1}, \ldots, \pi_{S_m}\right\rangle} \V^m \xrightarrow{(-)^{n_1} \times \ldots \times (-)^{n_m}} \V^m \xrightarrow{\times} \V, \] where $\times : \V^m \to \V$ is the product $\V$-functor given by $\left(X_1, \ldots, X_m\right) \mapsto X_1 \times \ldots \times X_m$ ($X_1, \ldots, X_m \in \V$). To prove the result, it therefore suffices to show that each functor $(-)^n : \V \to \V$ ($n \geq 1$) preserves filtered colimits, and that $\times : \V^m \to \V$ preserves filtered colimits. Since $\V$ is cartesian closed, both claims follow from \cite[3.8]{Kellystr} (and its proof).
\end{proof}

\begin{para}[\textbf{The subcategory of arities $\N_\calS \hookrightarrow \V^\calS$ is bounded and eleutheric}]
\label{bounded_para}
Since $\V$ is cartesian closed, we know from \ref{eleuth_para} that the subcategory of arities $\N_\calS \hookrightarrow \V^\calS$ is \emph{eleutheric} \cite[Definition 3.3]{Pres}. From Lemma \ref{bounded_lem} we also know that the subcategory of arities $\N_\calS \hookrightarrow \V^\calS$ is \emph{bounded} in the sense of \cite[Definitions 6.1.6 and 6.1.10]{Pres}. Then, in view of \ref{eleuth_para}, \cite[Theorem 5.20]{EP} specializes to the present context to yield the following: 
\end{para}

\begin{theo}[\cite{EP}]
\label{EP_thm}
For every diagrammatic $\N_\calS$-presentation $\scrT$ \eqref{EP_eq_para}, there is a strongly finitary $\V$-monad $\T_\scrT$ on $\V^\calS$ such that $\scrT\Alg \cong \T_\scrT\Alg$ in $\V\CAT/\V^\calS$. \qed
\end{theo}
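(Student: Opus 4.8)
The plan is to obtain this statement as a direct specialization of \cite[Theorem 5.20]{EP}, since the two hypotheses that theorem requires have already been verified in \ref{bounded_para}. Concretely, \cite[Theorem 5.20]{EP} says that whenever a subcategory of arities is both \emph{eleutheric} and \emph{bounded}, every diagrammatic presentation over it presents a $\V$-monad whose underlying $\V$-endofunctor is $\J$-ary (where $\J$ denotes the subcategory of arities). Here the subcategory of arities is $\N_\calS \hookrightarrow \V^\calS$; it is eleutheric by \ref{eleuth_para} (using that $\V$ is cartesian closed, via \cite[Theorem 6.8]{Initial_algebras}), and it is bounded by Lemma \ref{bounded_lem} together with \ref{bounded_para}. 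So \cite[Theorem 5.20]{EP} applies to the given diagrammatic $\N_\calS$-presentation $\scrT$ and produces an $\N_\calS$-ary $\V$-monad $\T_\scrT$ on $\V^\calS$ with $\scrT\Alg \cong \T_\scrT\Alg$ in $\V\CAT/\V^\calS$; and by \ref{eleuth_para} an $\N_\calS$-ary $\V$-endofunctor of $\V^\calS$ is exactly a strongly finitary one in the sense of Definition \ref{sf_def}, so $\T_\scrT$ is strongly finitary, as claimed.

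For the reader who would rather see the mechanism than cite it wholesale, the argument underlying \cite{EP} runs as follows. Starting from the free-form $\N_\calS$-signature $\scrS$ one first builds the free $\scrS$-algebra $\V$-monad: its underlying $\V$-endofunctor is obtained from the polynomial $\V$-endofunctor $\Sigma_\scrS := \coprod_{\sigma \in \scrS}\bigl(\V^\calS(J_\sigma, -) \tensor P_\sigma\bigr) : \V^\calS \to \V^\calS$ by the standard free-monad construction, as the colimit of the chain whose first terms are $\Id$ and $\Id + \Sigma_\scrS$. Here \emph{boundedness} — which for us is precisely Lemma \ref{bounded_lem}, the statement that each $\V^\calS(J, -)$ preserves filtered colimits — forces this transfinite construction to converge after $\omega$ steps and to yield an $\N_\calS$-ary $\V$-endofunctor, hence an $\N_\calS$-ary (i.e.~strongly finitary) $\V$-monad whose $\V$-category of algebras over $\V^\calS$ is $\scrS\Alg$. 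One then imposes the diagrammatic $\scrS$-equations in $\scrE$ by passing to a suitable quotient $\V$-monad: eleuthericity is what guarantees that the ambient category of $\N_\calS$-ary $\V$-monads on $\V^\calS$ is cocomplete and otherwise well enough behaved for this quotient $\T_\scrT$ to exist and to have precisely the $\scrT$-algebras as its algebras over $\V^\calS$.

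The step I expect to be the genuine obstacle — and the reason the result is imported from \cite{EP} rather than reproved here — is this last one: constructing the equation-imposing quotient within strongly finitary $\V$-monads and checking that its algebras coincide with the $\scrS$-algebras satisfying every diagrammatic equation in $\scrE$. That part requires the full apparatus of \cite{EP} linking diagrammatic presentations to presentations of monads by colimits, and it is where both eleuthericity and boundedness are genuinely needed. By contrast, the first half — convergence of the free-monad construction and the resulting strong finitarity of $\T_\scrT$ — is comparatively routine once Lemma \ref{bounded_lem} is available.
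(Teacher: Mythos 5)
Your proposal matches the paper's own treatment exactly: the paper likewise obtains Theorem \ref{EP_thm} as a direct specialization of \cite[Theorem 5.20]{EP}, using precisely the facts recorded in \ref{eleuth_para} and \ref{bounded_para} (via Lemma \ref{bounded_lem}) that the subcategory of arities $\N_\calS \hookrightarrow \V^\calS$ is eleutheric and bounded, together with the identification of $\N_\calS$-ary with strongly finitary from \ref{eleuth_para}. Your additional sketch of the internal mechanism of \cite{EP} goes beyond what the paper records but does not change the argument.
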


\noindent We may now prove the central result of this section:

\begin{theo}
\label{th_to_mnd}
Under the assumptions of \ref{given_data}, every $\V$-enriched $\calS$-sorted equational theory presents a strongly finitary $\V$-monad on $\V^\calS$.
\end{theo}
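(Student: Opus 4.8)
The plan is to chain together the three results that have just been assembled. Given a $\V$-enriched $\calS$-sorted equational theory $\calT = (\Sigma, \calE)$, first apply Proposition \ref{equiv_presentations_prop1}(3) to obtain a diagrammatic $\N_\calS$-presentation $\scrT_\calT$ with $\calT\Alg \cong \scrT_\calT\Alg$ in $\VCAT/\V^\calS$. Then apply Theorem \ref{EP_thm} (which is available because, by \ref{bounded_para}, the subcategory of arities $\N_\calS \hookrightarrow \V^\calS$ is both eleutheric and bounded, using cartesian closedness of $\V$ and Lemma \ref{bounded_lem}) to obtain a strongly finitary $\V$-monad $\T_{\scrT_\calT}$ on $\V^\calS$ with $\scrT_\calT\Alg \cong \T_{\scrT_\calT}\Alg$ in $\VCAT/\V^\calS$. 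Composing these two isomorphisms over $\V^\calS$ yields $\calT\Alg \cong \T_{\scrT_\calT}\Alg$ in $\VCAT/\V^\calS$, which by Definition \ref{monad_pres_def} is precisely the statement that $\calT$ presents the strongly finitary $\V$-monad $\T_{\scrT_\calT}$.

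In more detail, the only thing to check is that the isomorphisms genuinely live in the slice category $\VCAT/\V^\calS$, i.e.\ that they commute with the respective forgetful $\V$-functors $U^\calT$, $U^{\scrT_\calT}$, and $U^{\T_{\scrT_\calT}}$ down to $\V^\calS$; but this is exactly what Proposition \ref{equiv_presentations_prop1}(3) and Theorem \ref{EP_thm} assert, so the composite isomorphism is automatically a morphism (indeed an isomorphism) in $\VCAT/\V^\calS$. Since isomorphism in a slice category is transitive, there is nothing further to verify.

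Since all the substantive work has been offloaded to Proposition \ref{equiv_presentations_prop1}, Lemma \ref{bounded_lem}, and the cited Theorem \ref{EP_thm} (i.e.\ \cite[Theorem 5.20]{EP}), I do not expect any genuine obstacle in this proof itself — it is a short bookkeeping argument. If there were a subtle point, it would be ensuring that the two isomorphisms are compatible enough to compose in the slice, but as noted this is built into the statements being invoked. The real content lies upstream, in establishing that $\N_\calS \hookrightarrow \V^\calS$ is eleutheric and bounded (so that \cite[Theorem 5.20]{EP} applies) and in translating syntactic $|\Sigma|$-equations into diagrammatic $\scrS_\Sigma$-equations, both of which have already been handled.

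\begin{proof}
Let $\calT = (\Sigma, \calE)$ be a $\V$-enriched $\calS$-sorted equational theory. By Proposition \ref{equiv_presentations_prop1}(3), there is a diagrammatic $\N_\calS$-presentation $\scrT_\calT$ such that $\calT\Alg \cong \scrT_\calT\Alg$ in $\VCAT/\V^\calS$. By \ref{bounded_para}, the subcategory of arities $\N_\calS \hookrightarrow \V^\calS$ is eleutheric and bounded, so Theorem \ref{EP_thm} applies to $\scrT_\calT$: there is a strongly finitary $\V$-monad $\T_{\scrT_\calT}$ on $\V^\calS$ such that $\scrT_\calT\Alg \cong \T_{\scrT_\calT}\Alg$ in $\VCAT/\V^\calS$. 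Composing these two isomorphisms over $\V^\calS$, we obtain $\calT\Alg \cong \T_{\scrT_\calT}\Alg$ in $\VCAT/\V^\calS$, which by Definition \ref{monad_pres_def} means that $\calT$ presents the strongly finitary $\V$-monad $\T_{\scrT_\calT}$ on $\V^\calS$.
\end{proof}
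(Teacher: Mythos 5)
Your proof is correct and follows exactly the same route as the paper's: apply Proposition \ref{equiv_presentations_prop1}(3) to replace $\calT$ by a diagrammatic $\N_\calS$-presentation, then invoke Theorem \ref{EP_thm} (available since $\N_\calS \hookrightarrow \V^\calS$ is bounded and eleutheric by \ref{bounded_para}) and compose the isomorphisms in $\V\CAT/\V^\calS$. Nothing is missing.
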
 

\begin{proof}
Given a $\V$-enriched $\calS$-sorted equational theory $\calT$, by Proposition \ref{equiv_presentations_prop1} there is a diagrammatic $\N_\calS$-presentation $\scrT$ such that $\calT\Alg \cong \scrT\Alg$ in $\VCAT/\V^\calS$. Then by Theorem \ref{EP_thm} there is a strongly finitary $\V$-monad $\T$ on $\V^\calS$ such that $\T\Alg \cong \scrT\Alg \cong \calT\Alg$ in $\VCAT/\V^\calS$, as desired. 
\end{proof}

\section{Strongly finitary $\V$-monads on $\V^\calS$ are presented by $\V$-enriched $\calS$-sorted equational theories}
\label{converse_section}

In this section, we shall establish in Theorem \ref{mnd_has_pres_thm} the converse of Theorem \ref{th_to_mnd}, namely that every strongly finitary $\V$-monad on $\V^\calS$ is presented by a $\V$-enriched $\calS$-sorted equational theory. Recalling that we write $j : \N_\calS \hookrightarrow \V^\calS$ for the inclusion, the idea for the proof of Theorem \ref{mnd_has_pres_thm} is as follows: first, every strongly finitary $\V$-monad $\T = (T, \eta, \mu)$ on $\V^\calS$ determines an associated \emph{$\N_\calS$-relative $\V$-monad} $Tj$ on $\V^\calS$ \eqref{rel_mnd_para}. There is a $\V$-category $Tj\Alg$ over $\V^\calS$ of \emph{algebras} \eqref{rel_mnd_para} for the $\N_\calS$-relative $\V$-monad $Tj$, and it turns out that $Tj\Alg$ is isomorphic (in $\V\CAT/\V^\calS$) to the $\V$-category of algebras for a certain $\V$-enriched $\calS$-sorted equational theory $\calT_\T$ determined by $\T$ \eqref{mnd_th_def}. Since the subcategory of arities $\N_\calS \hookrightarrow \V^\calS$ is bounded and eleutheric \eqref{bounded_para}, we are then able to deduce from \cite{EP} that $\T\Alg \cong Tj\Alg$ in $\V\CAT/\V^\calS$, which yields Theorem \ref{mnd_has_pres_thm}.  

Combining Theorems \ref{th_to_mnd} and \ref{mnd_has_pres_thm} will then allow us to establish Theorem \ref{variety_thm}, which states that the category $\Mnd_\sff\left(\V^\calS\right)$ of strongly finitary $\V$-monads on $\V^\calS$ is dually equivalent to the category $\Var\left(\V^\calS\right)$ of \emph{(finitary) $\V$-enriched $\calS$-sorted varieties} \eqref{variety}.

\begin{para}[\textbf{$\N_\calS$-relative $\V$-monads and their algebras}]
\label{rel_mnd_para}
Let $\T = (T, \eta, \mu)$ be a $\V$-monad on $\V^\calS$. The composite $\V$-functor $Tj : \N_\calS \to \V^\calS$ carries the structure of an \emph{$\N_\calS$-relative $\V$-monad\footnote{Relative monads were introduced (in the unenriched setting) in \cite{Altenkirchmonads}.} (on $\V^\calS$)} \cite[Definition 8.1]{Pres} that we also denote by $Tj$. In detail, what this means is that we have the $\N_\calS$-indexed family $\left(\eta_J : J \to TJ\right)_{J \in \N_\calS}$ of morphisms of $\V^\calS$, together with the $\N_\calS^2$-indexed family \[ \left(m_{JK} : \V^\calS(J, TK) \to \V^\calS(TJ, TK)\right)_{J, K \in \N_\calS} \] of morphisms of $\V^\calS$, where
\[ m_{JK} = \left(\V^\calS(J, TK) \xrightarrow{T_{J, TK}} \V^\calS(TJ, TTK) \xrightarrow{\V^\calS\left(1, \mu_K\right)} \V^\calS(TJ, TK)\right) \] for all $J, K \in \N_\calS$, and these data make the three diagrams of \cite[Definition 8.1]{Pres} commute. For each morphism $f : J \to TK$ ($J, K \in \N_\calS$) of $\V^\calS$, we write 
\begin{equation}
\label{*_eq}
f^* := \left(TJ \xrightarrow{Tf} TTK \xrightarrow{\mu_K} TK\right),
\end{equation}
so that the underlying function of $m_{JK}$ is given by $f \mapsto f^*$ ($f \in \V^\calS(J, TK)$).

As in \cite[Definition 8.4]{Pres}, an \emph{algebra} for the $\N_\calS$-relative $\V$-monad $Tj$, or a \emph{$Tj$-algebra}, is an object $A$ of $\V^\calS$ equipped with, for each $J \in \N_\calS$, a $\V$-morphism
\begin{equation}
\label{rel_mor}
\sigma_J^A : \V^\calS(J, A) \to \V^\calS(TJ, A),
\end{equation} 
such that the following diagrams commute for all $J, K \in \N_\calS$: 
\begin{equation}
\label{rel_1}
\begin{tikzcd}
	{\V^\calS(J, A)} && {\V^\calS(TJ, A)} && \V^\calS(J, A)
	\arrow["{\sigma_J^A}", from=1-1, to=1-3]
	\arrow["{\V^\calS\left(\eta_J, 1\right)}", from=1-3, to=1-5]
	\arrow["{1}"', curve={height=18pt}, from=1-1, to=1-5]
\end{tikzcd}
\end{equation}
\begin{equation}
\label{rel_2}
\begin{tikzcd}
	{\V^\calS(J, TK) \times \V^\calS(K, A)} &&& {\V^\calS(TJ, TK) \times \V^\calS(TK, A)} \\
	{V^\calS(J, TK) \times \V^\calS(TK, A)} && {\V^\calS(J, A)} & {\V^\calS(TJ, A)},
	\arrow["{m_{JK} \times \sigma_K^A}", from=1-1, to=1-4]
	\arrow["c", from=1-4, to=2-4]
	\arrow["{1 \times \sigma_K^A}"', from=1-1, to=2-1]
	\arrow["c"', from=2-1, to=2-3]
	\arrow["{\sigma_J^A}"', from=2-3, to=2-4]
\end{tikzcd}
\end{equation}
where each $c$ is a composition morphism of the $\V$-category $\V^\calS$. As in \cite[Definition 8.8]{EP}, there is a $\V$-category $Tj\Alg$ over $\V^\calS$ of $Tj$-algebras. 

For each sort $S \in \calS$, we have the $\V$-morphism
\begin{equation}
\label{sigma_comp_eq}
\sigma_{J, S}^A : (TJ)_S \times A^J \to A_S
\end{equation}
given as the composite
\[ (TJ)_S \times A^J \xrightarrow{\sim} \V^\calS(J, A) \times (TJ)_S \xrightarrow{\sigma_J^A \times 1} \V^\calS(TJ, A) \times (TJ)_S \xrightarrow{\pi_S \times 1} \left[(TJ)_S, A_S\right] \times (TJ)_S \xrightarrow{\ev} A_S, \] where $\pi_S : \V^\calS(TJ, A) \to \left[(TJ)_S, A_S\right]$ is a projection morphism and $\ev : \left[(TJ)_S, A_S\right] \times (TJ)_S \to A_S$ is an evaluation morphism \eqref{given_data}. Since $|-| : \V \to \Set$ is faithful, the commutativity of \eqref{rel_1} for each $J = \left(n_{S_1}, \ldots, n_{S_m}\right) \in \N_\calS$ is readily seen to be equivalent to the satisfaction of the following equality for all $1 \leq i \leq m$, all $1 \leq j \leq n_{S_i}$, and all $x : J \to A$:
\begin{equation}
\label{rel_1'}
\left(\sigma_{J, S_i}^A\right)\left(\eta_{J, S_i}(j), x\right) = x_{S_i}(j) \in \left|A_{S_i}\right|,
\end{equation}
using the notation of \eqref{J_eq} and recalling that $\eta_{J, S_i} : \left(n_{S_i} \cdot 1\right) \to (TJ)_{S_i}$. 

Similarly, the commutativity of \eqref{rel_2} for all $J = \left(n_{S_1}, \ldots, n_{S_m}\right) = \left(n_1, \ldots, n_m\right), K \in \N_\calS$ is equivalent to the satisfaction of the following equality for all $k : J \to TK$, all $S \in \calS$, all $p \in \left|(TJ)_S\right|$, and all $x : K \to A$:
\begin{equation}
\label{rel_2'}
\left(\sigma_{K, S}^A\right)\left(k^*_S(p), x\right) = \left(\sigma_{J, S}^A\right)\left(p, \left(\left(\sigma_{K, S_1}^A\right)\left(k_{S_1}(1), x\right), \ldots, \left(\sigma_{K, S_{m}}^A\right)\left(k_{S_m}(n_m), x\right)\right)\right) \in \left|A_S\right|,
\end{equation}
using the notation of \eqref{J_eq} and recalling that $k^* : TJ \to TK$ \eqref{*_eq}.

\end{para}

\begin{para}[\textbf{The variable context $\vec{v} : J$}]
\label{notation_para}
For each $J = \left(n_{S_1}, \ldots, n_{S_m}\right) \in \N_\calS$, we write $\vbar : J$ for the following $\calS$-sorted variable context: 
\[ v_{1, 1} : S_1, \ldots, v_{1, n_{S_1}} : S_1, \ldots, v_{m, 1} : S_m, \ldots, v_{m, n_{S_m}} : S_m \]
consisting of $n_{S_i}$ pairwise distinct $\calS$-sorted variables of sort $S_i$ for each $1 \leq i \leq m$.  
\end{para}

\noindent The considerations of \ref{rel_mnd_para} now motivate the following definition:

\begin{defn}[\textbf{The $\V$-enriched $\calS$-sorted equational theory associated to a $\V$-monad on $\V^\calS$}]
\label{mnd_th_def}
Given a $\V$-monad $\T = (T, \eta, \mu)$ on $\V^\calS$, we define an associated $\V$-enriched $\calS$-sorted equational theory $\calT_\T = \left(\Sigma_\T, \calE_\T\right)$ as follows. 

For each $J \in \N_\calS$ and each $S \in \calS$, we consider an operation symbol $\sigma_{J, S}$ of input sorts $J$, output sort $S$, and parameter object $(TJ)_S \in \ob\V$, and we define the $\V$-enriched $\calS$-sorted signature $\Sigma_\T$ to consist of precisely these operation symbols $\sigma_{J, S}$ ($J \in \N_\calS, S \in \calS$).

We define the set $\calE_\T$ of syntactic $\left|\Sigma_\T\right|$-equations in context as follows. 
\begin{itemize}[leftmargin=*]
\item For each morphism $k : J \to TK$ of $\V^\calS$ with $J, K \in \N_\calS$ and $J = \left(n_{S_1}, \ldots, n_{S_m}\right) = \left(n_1, \ldots, n_m\right)$, each sort $S \in \calS$, and each element $p \in \left|(TJ)_S\right|$, we consider the following syntactic $\left|\Sigma_\T\right|$-equation of sort $S$ in context $\vbar : K$ \eqref{notation_para}:
\begin{equation}
\label{k_eq}
\left[\vbar : K \vdash \left(\sigma_{K, S}\right)_{k^*_S(p)}\left(\vbar\right) \doteq \left(\sigma_{J, S}\right)_p\left(\left(\sigma_{K, S_1}\right)_{k_{S_1}(1)}\left(\vbar\right), \ldots, \left(\sigma_{K, S_{m}}\right)_{k_{S_m}(n_m)}\left(\vbar\right)\right) : S\right],
\end{equation}
using the notation of \eqref{J_eq} and recalling that $k^* : TJ \to TK$ \eqref{*_eq}.

\item For each $J = \left(n_{S_1}, \ldots, n_{S_m}\right) \in \N_\calS$, each $1 \leq i \leq m$, and each $1 \leq j \leq n_{S_i}$, we also consider the following syntactic $\left|\Sigma_\T\right|$-equation of sort $S_i$ in context $\vbar : J$ \eqref{notation_para}:
\begin{equation}
\label{eta_eq}
\left[\vbar : J \vdash \left(\sigma_{J, S_i}\right)_{\eta_{J, S_i}(j)}\left(\vbar\right) \doteq v_{i, j} : S_i\right],
\end{equation} 
using the notation of \eqref{J_eq} and recalling that $\eta_{J, S_i} : \left(n_{S_i} \cdot 1\right) \to (TJ)_{S_i}$.
\end{itemize}
We then declare that $\calE_\T$ consists of precisely the syntactic $\left|\Sigma_\T\right|$-equations of the form \eqref{k_eq} and \eqref{eta_eq}. This completes the definition of the $\V$-enriched $\calS$-sorted equational theory $\calT_\T = \left(\Sigma_\T, \calE_\T\right)$.
\end{defn}

\noindent We may now prove the following converse of Theorem \ref{th_to_mnd}:

\begin{theo}
\label{mnd_has_pres_thm}
Under the assumptions of \ref{given_data}, every strongly finitary $\V$-monad $\T$ on $\V^\calS$ is presented by the $\V$-enriched $\calS$-sorted equational theory $\calT_\T = \left(\Sigma_\T, \calE_\T\right)$ \eqref{mnd_th_def}.
\end{theo}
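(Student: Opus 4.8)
The plan is to factor the statement through the relative-monad machinery of \cite{EP} and \cite{Pres}, exactly as announced in the introduction to \S\ref{converse_section}. Given a strongly finitary $\V$-monad $\T = (T, \eta, \mu)$ on $\V^\calS$, the first step is to unwind the definition of the $\N_\calS$-relative $\V$-monad $Tj$ and its category $Tj\Alg$ of algebras \eqref{rel_mnd_para}, and to observe that the defining data of a $Tj$-algebra $A$ — the $\V$-morphisms $\sigma_J^A : \V^\calS(J, A) \to \V^\calS(TJ, A)$, or equivalently, pointwise, the $\V$-morphisms $\sigma_{J,S}^A : (TJ)_S \times A^J \to A_S$ of \eqref{sigma_comp_eq} — is precisely the same as the data of a $\Sigma_\T$-algebra structure on $A$, where $\Sigma_\T$ is the signature of Definition \ref{mnd_th_def} with one operation symbol $\sigma_{J,S}$ of type $(J, S, (TJ)_S)$ for each $J \in \N_\calS$ and $S \in \calS$. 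Here one uses the hypotheses of \ref{given_data} (cartesian closedness, $|-| \cong \V(1,-)$ faithful) to pass freely between a $\V$-morphism $P \to [A^J, A_S]$ and its currying $P \times A^J \to A_S$, and to check that morphisms of $Tj$-algebras coincide with morphisms of $\Sigma_\T$-algebras; the latter is a diagram-chase comparing the naturality square in \ref{rel_mnd_para} with the two equivalent squares in Definition \ref{sig_algebra}.

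The second step is to match up the \emph{axioms}. The two coherence diagrams \eqref{rel_1} and \eqref{rel_2} required of a $Tj$-algebra were already translated, using the faithfulness of $|-|$, into the elementwise identities \eqref{rel_1'} and \eqref{rel_2'}; on the other side, a $\Sigma_\T$-algebra $A$ is a $\calT_\T$-algebra iff its underlying $|\Sigma_\T|$-algebra $|A|$ satisfies the syntactic equations \eqref{eta_eq} and \eqref{k_eq}. Interpreting these syntactic equations in $|A|$ using the rules of \ref{syntax_para}: the interpretation of $\left(\sigma_{J,S_i}\right)_{\eta_{J,S_i}(j)}(\vbar)$ in context $\vbar : J$ is exactly $\left(\sigma_{J,S_i}^A\right)(\eta_{J,S_i}(j), -)$ precomposed with the tuple of projections, so \eqref{eta_eq} unwinds to precisely \eqref{rel_1'}; and the interpretation of the nested term in \eqref{k_eq} in context $\vbar : K$ unwinds, by the substitution/composite clause in \ref{syntax_para}, to precisely \eqref{rel_2'}. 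Hence a $\Sigma_\T$-algebra is a $\calT_\T$-algebra if and only if the corresponding $Tj$-algebra satisfies \eqref{rel_1} and \eqref{rel_2}, and this correspondence is compatible with the forgetful $\V$-functors to $\V^\calS$ and with morphisms, giving an isomorphism $\calT_\T\Alg \cong Tj\Alg$ in $\VCAT/\V^\calS$.

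The third and final step invokes \cite{EP}: since the subcategory of arities $\N_\calS \hookrightarrow \V^\calS$ is bounded and eleutheric \eqref{bounded_para}, and $\T$ is strongly finitary, hence $\N_\calS$-ary (by \ref{eleuth_para}), the relevant theorem of \cite{EP} on relative monads gives $\T\Alg \cong Tj\Alg$ in $\VCAT/\V^\calS$ — the point being that an $\N_\calS$-ary $\V$-monad is the left Kan extension along $j$ of its own restriction $Tj$, so its Eilenberg--Moore $\V$-category is recovered as the category of algebras for the associated relative monad. Composing, $\calT_\T\Alg \cong Tj\Alg \cong \T\Alg$ in $\VCAT/\V^\calS$, which is exactly the assertion that $\calT_\T$ presents $\T$ in the sense of Definition \ref{monad_pres_def}.

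The routine-but-delicate part, and the main obstacle, is Step 2: keeping the bookkeeping straight between the ``diagrammatic'' elementwise identities \eqref{rel_1'}, \eqref{rel_2'} and the formal syntactic interpretations of \eqref{eta_eq}, \eqref{k_eq}, in particular verifying that the interpretation of the composite term $\left(\sigma_{J,S}\right)_p\big((\sigma_{K,S_1})_{k_{S_1}(1)}(\vbar), \ldots\big)$ in $|A|$ really is the right-hand side of \eqref{rel_2'}, with the tuple of subterm-interpretations assembling into the correct morphism $A^K \to A^J$. I expect a short induction-free unwinding of \ref{syntax_para} suffices, but one must be careful about the indexing conventions of \eqref{J_eq} and the context $\vbar : K$ of \ref{notation_para}; it may also be cleanest to quote \cite{EP} or \cite{Pres} for the general statement that a diagrammatic presentation built tautologically from a relative monad presents that relative monad, and then only check the translation between $\calT_\T$ and that tautological presentation (which is again Step 1--2 above, now citing Proposition \ref{equiv_presentations_prop1} as the dictionary between $\V$-enriched $\calS$-sorted theories and diagrammatic $\N_\calS$-presentations).
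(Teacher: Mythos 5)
Your proposal is correct and follows essentially the same route as the paper: the paper likewise passes through the $\N_\calS$-relative $\V$-monad $Tj$, citing \cite[Remark 8.9]{EP} (via boundedness and eleuthericity of $\N_\calS \hookrightarrow \V^\calS$) for $\T\Alg \cong Tj\Alg$ and \cite[Proposition 8.7]{EP} together with the discussion in \ref{rel_mnd_para} for the identification of $Tj\Alg$ with the full sub-$\V$-category of $\Sigma_\T\Alg$ cut out by \eqref{rel_1'} and \eqref{rel_2'}, which it then observes is precisely $\calT_\T\Alg$. Your Step 2 bookkeeping is exactly the verification the paper dismisses as ``immediate,'' so the two arguments coincide up to the order in which the isomorphisms are composed.
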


\begin{proof}
Since $\T$ is strongly finitary (i.e.~$\N_\calS$-ary, \ref{eleuth_para}) and the subcategory of arities $\N_\calS \hookrightarrow \V^\calS$ is bounded and eleutheric \eqref{bounded_para}, we deduce from \cite[Remark 8.9]{EP} that $\T\Alg$ is isomorphic in $\V\CAT/\V^\calS$ to the $\V$-category $Tj\Alg$ of algebras for the corresponding $\N_\calS$-relative $\V$-monad $Tj$ \eqref{rel_mnd_para}. Moreover, it readily follows from \cite[Proposition 8.7]{EP} and the discussion in \ref{rel_mnd_para} that the $\V$-category $Tj\Alg$ is isomorphic (in $\V\CAT/\V^\calS$) to the full sub-$\V$-category of $\Sigma_\T\Alg$ consisting of the $\Sigma_\T$-algebras that satisfy all of the equalities \eqref{rel_1'} and \eqref{rel_2'}. But it is immediate that these $\Sigma_\T$-algebras are precisely the $\calT_\T$-algebras, so that $\T\Alg \cong \calT_\T\Alg$ in $\V\CAT/\V^\calS$, as desired.
\end{proof}

\noindent For the record, we may now deduce the following converse to Proposition \ref{equiv_presentations_prop1}(3).

\begin{cor}
\label{diag_cor}
Under the assumptions of \ref{given_data}, for every diagrammatic $\N_\calS$-presentation $\scrT$, there is a $\V$-enriched $\calS$-sorted equational theory $\calT$ such that $\scrT\Alg \cong \calT\Alg$ in $\V\CAT/\V^\calS$.
\end{cor}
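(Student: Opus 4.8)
The plan is to obtain this as an immediate consequence of the two main theorems already proved, by composing a round trip through strongly finitary $\V$-monads. Given a diagrammatic $\N_\calS$-presentation $\scrT$, Theorem \ref{EP_thm} produces a strongly finitary $\V$-monad $\T_\scrT$ on $\V^\calS$ together with an isomorphism $\scrT\Alg \cong \T_\scrT\Alg$ in $\V\CAT/\V^\calS$. Since $\T_\scrT$ is strongly finitary, Theorem \ref{mnd_has_pres_thm} applies and shows that $\T_\scrT$ is presented by the $\V$-enriched $\calS$-sorted equational theory $\calT := \calT_{\T_\scrT}$ of Definition \ref{mnd_th_def}, i.e.~$\T_\scrT\Alg \cong \calT\Alg$ in $\V\CAT/\V^\calS$. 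Composing these two isomorphisms in the slice category $\V\CAT/\V^\calS$ yields $\scrT\Alg \cong \calT\Alg$ over $\V^\calS$, which is exactly the assertion.

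There is essentially no obstacle: the corollary is a formal corollary of the chain ``diagrammatic $\N_\calS$-presentation $\rightsquigarrow$ strongly finitary $\V$-monad on $\V^\calS$ $\rightsquigarrow$ $\V$-enriched $\calS$-sorted equational theory'', and the only thing being used beyond Theorems \ref{EP_thm} and \ref{mnd_has_pres_thm} is that isomorphisms in $\V\CAT/\V^\calS$ are closed under composition. Together with Proposition \ref{equiv_presentations_prop1}(3), which provides the converse passage, this records that $\V$-enriched $\calS$-sorted equational theories and diagrammatic $\N_\calS$-presentations have precisely the same expressive power, in the sense that each determines, and is determined up to the relevant notion of equivalence by, its $\V$-category of algebras over $\V^\calS$.
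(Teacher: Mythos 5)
Your proposal is correct and follows exactly the paper's own argument: apply Theorem \ref{EP_thm} to obtain a strongly finitary $\V$-monad $\T$ with $\scrT\Alg \cong \T\Alg$, then apply Theorem \ref{mnd_has_pres_thm} to obtain $\calT_\T$ with $\T\Alg \cong \calT_\T\Alg$, and compose the isomorphisms in $\V\CAT/\V^\calS$. Nothing further is needed.
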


\begin{proof}
By Theorem \ref{EP_thm}, there is a strongly finitary $\V$-monad $\T$ on $\V^\calS$ such that $\scrT\Alg \cong \T\Alg$ in $\V\CAT/\V^\calS$. Then by Theorem \ref{mnd_has_pres_thm}, there is a $\V$-enriched $\calS$-sorted equational theory $\calT_\T$ such that $\scrT\Alg \cong \T\Alg \cong \calT_\T\Alg$ in $\V\CAT/\V^\calS$, as desired.
\end{proof}

\begin{defn}
\label{variety}
A $\V$-category over $\V^\calS$ is a \textbf{(finitary) $\V$-enriched $\calS$-sorted variety} if it is of the form $\calT\Alg$ for some $\V$-enriched $\calS$-sorted equational theory $\calT$. We define the (ordinary) category $\Var\left(\V^\calS\right)$ of $\V$-enriched $\calS$-sorted varieties to be the full subcategory of $\V\CAT/\V^\calS$ consisting of the $\V$-enriched $\calS$-sorted varieties. 
\end{defn}

\begin{defn}
\label{sf_monadic}
Given a $\V$-functor $U : \A \to \V^\calS$, so that $\A$ may be regarded as a $\V$-category over $\V^\calS$, we say that $U$ (or $\A$) is \textbf{strictly $\N_\calS$-monadic} \cite[Definition 4.10]{Pres}, or \textbf{strongly finitary strictly monadic}, if there is a strongly finitary $\V$-monad $\T$ on $\V^\calS$ such that $\A \cong \T\Alg$ in $\V\CAT/\V^\calS$. We write $\SFF\Monadic^!$ for the full subcategory of $\V\CAT/\V^\calS$ consisting of the strongly finitary strictly monadic $\V$-categories over $\V^\calS$.
\end{defn}

\begin{theo}
\label{variety_thm}
Under the assumptions of \ref{given_data}, the category $\Var\left(\V^\calS\right)$ of $\V$-enriched $\calS$-sorted varieties is dually equivalent to the category $\Mnd_\sff\left(\V^\calS\right)$ of strongly finitary $\V$-monads on $\V^\calS$:
\[ \Var\left(\V^\calS\right) \simeq \Mnd_\sff\left(\V^\calS\right)^\op. \]  
\end{theo}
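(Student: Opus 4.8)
The plan is to establish the dual equivalence by assembling the object-level correspondence between varieties and strongly finitary monads (already provided by Theorems~\ref{th_to_mnd} and~\ref{mnd_has_pres_thm}) into a genuine equivalence of categories, with the contravariance coming from the standard fact that the assignment $\T \mapsto \T\Alg$ is contravariantly functorial on monad morphisms. First I would recall that by Theorem~\ref{th_to_mnd} every object of $\Var(\V^\calS)$ is of the form $\calT\Alg$ and hence, via Proposition~\ref{equiv_presentations_prop1} and Theorem~\ref{EP_thm}, isomorphic in $\V\CAT/\V^\calS$ to $\T\Alg$ for some strongly finitary $\V$-monad $\T$; conversely, by Theorem~\ref{mnd_has_pres_thm}, for every strongly finitary $\V$-monad $\T$ the $\V$-category $\T\Alg$ lies in $\Var(\V^\calS)$. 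Thus the two full subcategories $\Var(\V^\calS) \hookrightarrow \V\CAT/\V^\calS$ and $\SFF\Monadic^! \hookrightarrow \V\CAT/\V^\calS$ (Definition~\ref{sf_monadic}) have the same objects up to isomorphism, and in fact $\Var(\V^\calS) = \SFF\Monadic^!$ as full subcategories of $\V\CAT/\V^\calS$.

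The main work is then to identify $\SFF\Monadic^!$ with $\Mnd_\sff(\V^\calS)^\op$. I would invoke the general theory of (strictly) monadic functors over a fixed base: the functor $\Mnd(\V^\calS)^\op \to \V\CAT/\V^\calS$ sending a $\V$-monad $\T$ to its Eilenberg--Moore $\V$-category $(U^\T : \T\Alg \to \V^\calS)$ is fully faithful, its essential image being the strictly monadic $\V$-functors into $\V^\calS$ (this is the enriched analogue of the classical description of the monad--theory correspondence; one can cite the relevant result, e.g.\ in \cite{Dubucbook} or the formulation in \cite{Pres}). Restricting this fully faithful functor to the full subcategory $\Mnd_\sff(\V^\calS) \hookrightarrow \Mnd(\V^\calS)$ of strongly finitary $\V$-monads, its essential image is by definition $\SFF\Monadic^!$, so we obtain an equivalence $\Mnd_\sff(\V^\calS)^\op \simeq \SFF\Monadic^!$. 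Combining this with the identification $\Var(\V^\calS) = \SFF\Monadic^!$ from the previous paragraph yields
\[ \Var(\V^\calS) \simeq \SFF\Monadic^! \simeq \Mnd_\sff(\V^\calS)^\op, \]
which is the claim.

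I expect the main obstacle to be making the fully faithfulness of $\T \mapsto \T\Alg$ on monad morphisms precise in the enriched setting and over the non-standard base $\V^\calS$, and in particular confirming that ``strictly monadic'' (as opposed to merely monadic up to equivalence) is the correct notion to match with the equality $\calT\Alg \cong \T\Alg$ in the slice category $\V\CAT/\V^\calS$ that appears in Definitions~\ref{monad_pres_def} and~\ref{sf_monadic}. Here one must be careful that the relevant comparison is in the slice $\V\CAT/\V^\calS$ (strict commutation of forgetful functors) rather than in the $2$-categorical slice, so that the correspondence is an equivalence of ordinary categories rather than of $2$-categories; the cited enriched monadicity results are stated in exactly this strict form, so this is a matter of bookkeeping rather than of substance. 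Everything else --- the object-level surjectivity in both directions, and the fact that $\Var(\V^\calS)$ and $\SFF\Monadic^!$ are literally the same full subcategory --- is immediate from the theorems already proved.
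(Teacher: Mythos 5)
Your proposal is correct and follows essentially the same route as the paper: Theorems \ref{th_to_mnd} and \ref{mnd_has_pres_thm} identify $\Var\left(\V^\calS\right)$ with $\SFF\Monadic^!$ up to repletion, and the equivalence $\SFF\Monadic^! \simeq \Mnd_\sff\left(\V^\calS\right)^\op$ via the fully faithful semantics functor $\T \mapsto \T\Alg$ is exactly what the paper cites from \cite[\S 9.3]{Pres}. The only slight overstatement is the claim that $\Var\left(\V^\calS\right)$ and $\SFF\Monadic^!$ are \emph{literally} the same full subcategory of $\V\CAT/\V^\calS$ --- the latter is the repletion (isomorphism-closure) of the former, since an object of $\SFF\Monadic^!$ need only be \emph{isomorphic} to some $\calT\Alg$ rather than of that form on the nose --- but this is immaterial, as the inclusion is then still fully faithful and essentially surjective, which is all the argument needs.
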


\begin{proof}
By Theorems \ref{th_to_mnd} and \ref{mnd_has_pres_thm}, the repletion of $\Var\left(\V^\calS\right)$ in $\V\CAT/\V^\calS$ is the full subcategory $\SFF\Monadic^!$ of $\V\CAT/\V^\calS$ \eqref{sf_monadic}. So $\Var\left(\V^\calS\right) \simeq \SFF\Monadic^!$, but $\SFF\Monadic^! \simeq \Mnd_\sff\left(\V^\calS\right)^\op$ by \cite[\S 9.3]{Pres}.  
\end{proof}

\noindent A related result that is established in a more general setting is \cite[Theorem 5.26]{EP}.

\begin{para}[\textbf{$\N_\calS$-nervous $\V$-monads and $\N_\calS$-theories}]
\label{nervous_para}
We say that a $\V$-monad $\T$ on $\V^\calS$ is \emph{$\N_\calS$-nervous} if it satisfies the (somewhat technical) conditions of \cite[Definition 4.9]{Struct}, one of which is that the full sub-$\V$-category of $\T\Alg$ consisting of the free $\T$-algebras on objects of $\N_\calS$ is dense (in the enriched sense). Since the subcategory of arities $\N_\calS \hookrightarrow \V^\calS$ is eleutheric \eqref{eleuth_para}, we deduce from \cite[Corollary 5.1.14]{Struct} that the strongly finitary (i.e.~$\N_\calS$-ary, \ref{eleuth_para}) $\V$-monads on $\V^\calS$ coincide with the $\N_\calS$-nervous $\V$-monads on $\V^\calS$.

An \emph{$\N_\calS$-theory} \cite[Definition 3.1]{Struct} is a $\V$-category $\mathfrak{T}$ equipped with an identity-on-objects $\V$-functor $\tau : \N_\calS^\op \to \mathfrak{T}$ satisfying the condition that each $\mathfrak{T}(J, \tau-) : \N_\calS^\op \to \V$ ($J \in \N_\calS$) is a \emph{nerve} for the inclusion $j : \N_\calS \hookrightarrow \V^\calS$, meaning that $\mathfrak{T}(J, \tau-) \cong \V^\calS(j-, X)$ for some $X \in \ob\V^\calS$. We write $\Th_{\N_\calS}\left(\V^\calS\right)$ for the (ordinary) category of $\N_\calS$-theories and their morphisms (see \cite[Definition 3.1]{Struct}). A \emph{(concrete) $\frakT$-algebra} is an object $A$ of $\V^\calS$ equipped with a $\V$-functor $M : \frakT \to \V$ satisfying $M \circ \tau = \V^\calS(j-, A) : \N_\calS^\op \to \V$. There is a $\V$-category $\frakT\Alg$ of $\frakT$-algebras and a forgetful $\V$-functor $U^\frakT : \frakT\Alg \to \V^\calS$ (see \cite[Definition 3.2]{Struct}), so that $\frakT\Alg$ may be regarded as a $\V$-category over $\V^\calS$. When $\calS$ is a singleton, $\N_\calS$-theories are precisely the enriched algebraic theories studied by Borceux and Day \cite{BorceuxDay}.

Since the subcategory of arities $\N_\calS \hookrightarrow \V^\calS$ is eleutheric \eqref{eleuth_para}, it is then \emph{amenable} (in the sense of \cite[Definition 3.12]{Struct}) by \cite[Theorem 5.1.9]{Struct}. We then deduce from \cite[Theorem 4.13]{Struct} and the foregoing that there is an equivalence $\Th_{\N_\calS}\left(\V^\calS\right) \simeq \Mnd_{\sff}\left(\V^\calS\right)$ between the category of $\N_\calS$-theories and the category of strongly finitary $\V$-monads on $\V^\calS$, with the property that the $\V$-category of algebras for an $\N_\calS$-theory is isomorphic (in $\V\CAT/\V^\calS$) to the $\V$-category of algebras for the corresponding strongly finitary $\V$-monad. From Theorem \ref{variety_thm} we then immediately obtain the following:  
\end{para} 

\begin{theo}
\label{variety_thm_2}
Under the assumptions of \ref{given_data}, the category $\Var\left(\V^\calS\right)$ of $\V$-enriched $\calS$-sorted varieties is dually equivalent to the category $\Th_{\N_\calS}\left(\V^\calS\right)$ of $\N_\calS$-theories:
\[ \Var\left(\V^\calS\right) \simeq \Th_{\N_\calS}\left(\V^\calS\right)^\op. \] 
\end{theo}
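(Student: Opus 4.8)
The statement follows essentially by composing two equivalences that have already been established in the excerpt, so the proof is short. The plan is to invoke Theorem \ref{variety_thm}, which gives the dual equivalence $\Var\left(\V^\calS\right) \simeq \Mnd_\sff\left(\V^\calS\right)^\op$, and then to splice in the equivalence $\Th_{\N_\calS}\left(\V^\calS\right) \simeq \Mnd_\sff\left(\V^\calS\right)$ recalled in \ref{nervous_para}. Taking opposite categories on the latter and composing yields $\Var\left(\V^\calS\right) \simeq \Th_{\N_\calS}\left(\V^\calS\right)^\op$.

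First I would note that $\N_\calS \hookrightarrow \V^\calS$ is eleutheric (\ref{eleuth_para}), hence amenable by \cite[Theorem 5.1.9]{Struct}, so that \cite[Theorem 4.13]{Struct} applies; combined with \cite[Corollary 5.1.14]{Struct}, which identifies the $\N_\calS$-nervous $\V$-monads with the $\N_\calS$-ary (equivalently strongly finitary) ones, this produces an equivalence $\Th_{\N_\calS}\left(\V^\calS\right) \simeq \Mnd_\sff\left(\V^\calS\right)$. All of this has already been spelled out in \ref{nervous_para}, so in the proof I would simply cite that paragraph. Then I would combine this with Theorem \ref{variety_thm} to conclude.

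There is essentially no obstacle here, since both building blocks are stated earlier; the only thing to be slightly careful about is the direction of the duality — Theorem \ref{variety_thm} is contravariant (a dual equivalence with monads) while the theory–monad correspondence of \ref{nervous_para} is covariant, so the net effect is again a dual equivalence with $\N_\calS$-theories, which is exactly what is asserted. I would also remark in passing that, under these equivalences, the $\V$-category of algebras is preserved up to isomorphism over $\V^\calS$, so a $\V$-enriched $\calS$-sorted variety, the strongly finitary $\V$-monad presenting it, and the corresponding $\N_\calS$-theory all have the same $\V$-category of algebras over $\V^\calS$.

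\begin{proof}
By Theorem \ref{variety_thm} we have a dual equivalence $\Var\left(\V^\calS\right) \simeq \Mnd_\sff\left(\V^\calS\right)^\op$. On the other hand, as recalled in \ref{nervous_para}, the subcategory of arities $\N_\calS \hookrightarrow \V^\calS$ is eleutheric \eqref{eleuth_para} and hence amenable by \cite[Theorem 5.1.9]{Struct}, so \cite[Theorem 4.13]{Struct}, together with the identification of the $\N_\calS$-nervous $\V$-monads with the strongly finitary (i.e.~$\N_\calS$-ary) ones via \cite[Corollary 5.1.14]{Struct}, yields an equivalence $\Th_{\N_\calS}\left(\V^\calS\right) \simeq \Mnd_\sff\left(\V^\calS\right)$. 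Passing to opposite categories, we obtain $\Th_{\N_\calS}\left(\V^\calS\right)^\op \simeq \Mnd_\sff\left(\V^\calS\right)^\op$, and composing with the dual equivalence of Theorem \ref{variety_thm} gives
\[ \Var\left(\V^\calS\right) \simeq \Mnd_\sff\left(\V^\calS\right)^\op \simeq \Th_{\N_\calS}\left(\V^\calS\right)^\op, \]
as desired. Moreover, since each of the equivalences involved sends a structure to one with the same $\V$-category of algebras over $\V^\calS$ (see Theorem \ref{variety_thm} and \ref{nervous_para}), under this dual equivalence a $\V$-enriched $\calS$-sorted variety $\calT\Alg$ corresponds to the $\N_\calS$-theory whose $\V$-category of algebras is isomorphic to $\calT\Alg$ in $\V\CAT/\V^\calS$.
\end{proof}
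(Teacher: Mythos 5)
Your proof is correct and follows exactly the paper's route: the paper likewise obtains the equivalence $\Th_{\N_\calS}\left(\V^\calS\right) \simeq \Mnd_{\sff}\left(\V^\calS\right)$ in \ref{nervous_para} (via eleuthericity, amenability, and the identification of $\N_\calS$-nervous with strongly finitary $\V$-monads) and then composes with Theorem \ref{variety_thm}. No gaps; your added remark about preservation of the $\V$-category of algebras over $\V^\calS$ is consistent with what \ref{nervous_para} states.
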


\section{Examples}
\label{examples_section}

In this section, we provide some examples of our main setting \eqref{given_data}: a complete and cocomplete cartesian closed category $\V$ such that $|-| \cong \V(1, -) : \V \to \Set$ is faithful. For certain examples of such $\V$ (in \S\ref{rel_subsection} and \S\ref{CPO_subsection}), we shall also establish even more concrete, syntactic formulations of $\V$-enriched $\calS$-sorted equational theories, in which arbitrary $\V$-enriched $\calS$-sorted signatures are replaced by \emph{classical} $\calS$-sorted signatures (where all parameter objects are trivial, \ref{signature}) together with certain additional syntactic constructions related to the specific structure of $\V$ (see Definitions \ref{rel_theory} and \ref{CPO_theory}).  

\subsection{$\Set$}
\label{set_subsection}

The category $\Set$ is complete, cocomplete, and cartesian closed, and the identity functor on $\Set$ is (obviously) faithful and represented by the singleton set. So $\V = \Set$ is an example of our main setting, and from Theorem \ref{variety_thm} we recover the classical result that the category $\Var\left(\Set^\calS\right)$ of \emph{classical} (i.e.~$\Set$-enriched) $\calS$-sorted varieties is dually equivalent to the category $\Mnd_\sff\left(\Set^\calS\right)$ of (strongly) finitary monads on $\Set^\calS$ (see e.g.~\cite[Chapter 14]{Algebraic_theories}).

\subsection{Relational Horn theories}
\label{rel_subsection}

We establish in this subsection that if $\T$ is a \emph{relational Horn theory} satisfying certain conditions, then the category $\T\Mod$ of \emph{$\T$-models} and their morphisms is an example of our main setting \eqref{given_data}. We also provide an even more concrete, syntactic formulation of $(\T\Mod)$-enriched $\calS$-sorted equational theories in terms of \emph{classical $\calS$-sorted equational theories with $\T$-relations} \eqref{rel_theory}. 

We first review relational Horn theories and their categories of models; much of the content of this section is taken from \cite{Extensivity, Exp_relational}; see also \cite[\S 3]{Monadsrelational}. 

\begin{defn_sub}
\label{relational_sig}
{
A \textbf{(finitary) relational signature} is a set $\Pi$ of \emph{relation symbols} equipped with an assignment to each relation symbol $R \in \Pi$ of a natural number \emph{arity} $ \sfar(R) \geq 0$.\footnote{One can also consider more general (infinitary) relational signatures, where the arities of relation symbols are permitted to be arbitrary (possibly infinite) cardinal numbers; for example, see \cite{Rosickyconcrete}.}
}
\end{defn_sub}

\noindent We fix a relational signature $\Pi$ for the rest of \S\ref{rel_subsection} (unless otherwise stated).

\begin{defn_sub}
\label{edge}
{
A \textbf{$\Pi$-edge} in a set $S$ is a pair $\left(R, \left(s_1, \ldots, s_{\sfar(R)}\right)\right)$ consisting of a relation symbol $R \in \Pi$ and a tuple $\left(s_1, \ldots, s_{\sfar(R)}\right) \in S^{\sfar(R)}$. In other words, a $\Pi$-edge in $S$ is an element of the set-theoretic coproduct $\coprod_{R \in \Pi} S^{\sfar(R)}$. A \textbf{$\Pi$-structure} $X$ consists of a set $|X|$ equipped with a set $\sfE(X)$ of $\Pi$-edges in $|X|$. We can also describe a $\Pi$-structure $X$ as a set $|X|$ equipped with a subset $R^X \subseteq |X|^{\sfar(R)}$ for each relation symbol $R \in \Pi$: given $R \in \Pi$, we have $\left(x_1, \ldots, x_{\sfar(R)}\right) \in R^X$ iff $\sfE(X)$ contains the $\Pi$-edge $\left(R, \left(x_1, \ldots, x_{\sfar(R)}\right)\right)$. We shall often write $X \models Rx_1\ldots x_{\sfar(R)}$ instead of $\left(x_1, \ldots, x_{\sfar(R)}\right) \in R^X$ or $\left(R, \left(x_1, \ldots, x_{\sfar(R)}\right)\right) \in \sfE(X)$.
}
\end{defn_sub} 

\noindent When $R$ is a binary relation symbol (i.e.~its arity is $2$), we shall sometimes write $X \models x_1Rx_2$ rather than $X \models Rx_1x_2$.   

\begin{defn_sub}
\label{Pi_morphism}
{
Let $h : S \to T$ be a function from a set $S$ to a set $T$, and let $e = (R, (s_1, \ldots, s_n))$ be a $\Pi$-edge in $S$. We write $h \cdot e = h \cdot (R, (s_1, \ldots, s_n))$ for the $\Pi$-edge $(R, (h(s_1), \ldots, h(s_n)))$ in $T$. For a set $E$ of $\Pi$-edges in $S$, we write $h \cdot E$ for the set of $\Pi$-edges $\{h \cdot e \mid e \in S\}$ in $T$. A \textbf{$\Pi$-morphism $h : X \to Y$} from a $\Pi$-structure $X$ to a $\Pi$-structure $Y$ is a function $h : |X| \to |Y|$ such that $h \cdot \sfE(X) \subseteq \sfE(Y)$. We write $\Pi\Str$ for the category of $\Pi$-structures and their morphisms, which is a concrete category over $\Set$ by way of the forgetful functor $|-| : \Pi\Str \to \Set$ that sends each $\Pi$-structure $X$ to its underlying set $|X|$.    
}
\end{defn_sub} 

\noindent We now turn to the syntax of relational Horn theories. For the rest of \S\ref{rel_subsection}, we fix an infinite set of variables $\Var$.  

\begin{defn_sub}
\label{Horn_formula}
A \textbf{relational Horn formula (over $\Pi$)} is an expression of the form $\Phi \Longrightarrow \psi$, where $\Phi$ is a set of $\Pi$-edges in $\Var$ and $\psi$ is a $\left(\Pi \cup \{\doteq\}\right)$-edge in $\Var$, where $\doteq$ is a binary relation symbol not in $\Pi$. If $\Phi = \{\varphi_1, \ldots, \varphi_n\}$ is finite, then we write $\varphi_1, \ldots, \varphi_n \Longrightarrow \psi$, and if $\Phi = \varnothing$, then we write $\Longrightarrow \psi$. A \textbf{relational Horn formula without equality (over $\Pi$)} is a relational Horn formula $\Phi \Longrightarrow \psi$ (over $\Pi$) such that $\psi$ is a $\Pi$-edge in $\Var$ (i.e.~$\psi$ does not contain the relation symbol $\doteq$).  
\end{defn_sub}

\noindent We shall typically write $\Pi$-edges in $\Var$ as $Rv_1 \ldots v_n$ rather than $(R, (v_1, \ldots, v_n))$, and when $R \in \Pi$ has arity $2$, we shall typically write $v_1 R v_2$ rather than $Rv_1v_2$.

\begin{defn_sub}
\label{Horn_theory}
{
A \textbf{relational Horn theory $\T$ (without equality)} is a set of relational Horn formulas (without equality) over $\Pi$, which we call the \emph{axioms} of $\T$.   
}
\end{defn_sub}

\begin{defn_sub}
\label{formula_satis}
Let $X$ be a $\Pi$-structure. We define a $\left(\Pi \cup \{\doteq\}\right)$-structure $\overline{X}$ by $\left|\overline{X}\right| := |X|$ and $\sfE\left(\overline{X}\right) := \sfE(X) \cup \{(\doteq, (x, x)) \mid x \in |X|\}$. A \textbf{valuation in $X$} is a function $\kappa : \Var \to |X|$. We say that $X$ \textbf{satisfies} a relational Horn formula $\Phi \Longrightarrow \psi$ if $\overline{X} \models \kappa \cdot \psi$ for every valuation $\kappa$ in $X$ such that $X \models \kappa \cdot \varphi$ for all $\varphi \in \Phi$. A \textbf{model} of a relational Horn theory $\T$, or a \textbf{$\T$-model}, is a $\Pi$-structure that satisfies each axiom of $\T$. We write $\T\Mod$ for the full subcategory of $\Pi\Str$ consisting of the $\T$-models, so that $\T\Mod$ can be regarded as a concrete category over $\Set$ by restricting the forgetful functor $|-| : \Pi\Str \to \Set$.           
\end{defn_sub}

\begin{para_sub}[\textbf{Conditions for $\T\Mod$ to be an example of our main setting}]
\label{rel_ter_para}
Let $\T$ be a relational Horn theory. The category $\T\Mod$ is complete and cocomplete, and even \emph{locally presentable} (by \cite[Proposition 5.30]{LPAC}). The full subcategory $\T\Mod \hookrightarrow \Pi\Str$ is reflective by \cite[Proposition 3.6]{Monadsrelational}, so that every $\Pi$-structure generates a free $\T$-model. 

The terminal object of $\T\Mod$ is the $\Pi$-structure $1$ with $|1| = \{\ast\}$ and $1 \models R\ast\ldots\ast$ for all $R \in \Pi$ (i.e.~it is the \emph{indiscrete} $\Pi$-structure on the singleton set $\{\ast\}$). Given a set $S$, we say that a relation $R$ on $S$ is \emph{reflexive} if $(s, \ldots, s) \in R$ for all $s \in S$. The forgetful functor $|-| : \T\Mod \to \Set$ is then represented by the terminal object $1$ iff $\T$ is \emph{reflexive}, in the sense that for every $\T$-model $X$ and every $R \in \Pi$ the relation $R^X$ on $|X|$ is reflexive.

Thus, given a relational Horn theory $\T$, we deduce that the category $\T\Mod$ provides an example of our main setting if $\T$ is reflexive and $\T\Mod$ is cartesian closed.
\end{para_sub}

\begin{ex_sub}
\label{rel_examples}
{
We provide the following examples of relational Horn theories $\T$ such that $\T$ is reflexive and $\T\Mod$ is cartesian closed, so that $\T\Mod$ is an example of our main setting by \ref{rel_ter_para}.
\begin{enumerate}[leftmargin=*]
\item Given an arbitrary relational signature $\Pi$, suppose that $\T$ is the relational Horn theory without equality that simply consists of the axioms $\Longrightarrow Rv\ldots v$ for all $R \in \Pi$, so that a $\T$-model is just a $\Pi$-structure in which each relation is reflexive. Then $\T$ is reflexive and $\T\Mod$ is cartesian closed by \cite[Theorem 6.15]{Exp_relational}.

\item Let $\Pi$ consist of a single binary relation symbol $\leq$, and let $\T$ be the reflexive relational Horn theory without equality over $\Pi$ whose axioms are $\Longrightarrow v \leq v$ and $v_1 \leq v_2, v_2 \leq v_3 \Longrightarrow v_1 \leq v_3$. Then $\T\Mod$ is the cartesian closed category $\mathsf{Preord}$ of preordered sets and monotone functions. If one adds the additional axiom $v_1 \leq v_2, v_2 \leq v_1 \Longrightarrow v_1 \doteq v_2$, then the category of models of the resulting reflexive relational Horn theory is the cartesian closed category $\mathsf{Pos}$ of posets (partially ordered sets) and monotone functions. 

\item The following examples originate from \cite{Kelly, Metagories}. Let $(\Q, \leq)$ be a \emph{complete Heyting algebra}, i.e.~a complete lattice in which each $v \wedge (-) : \Q \to \Q$ ($v \in \Q$) preserves arbitrary suprema. A \emph{$\Q$-graph} or \emph{$\Q$-valued relation} $(X, d)$ is a set $X$ equipped with a function $d : X \times X \to \Q$. A \emph{reflexive $\Q$-graph} is a $\Q$-graph $(X, d)$ satisfying $d(x, x) = \top$ for all $x \in X$. A \emph{$\Q$-category} is a reflexive $\Q$-graph $(X, d)$ satisfying $d(x, z) \geq d(x, y) \wedge d(y, z)$ for all $x, y, z \in X$. A \emph{pseudo-$\Q$-metric space} is a $\Q$-category $(X, d)$ satisfying $d(x, y) = d(y, x)$ for all $x, y \in X$. Finally, a \emph{$\Q$-metric space} is a pseudo-$\Q$-metric space $(X, d)$ satisfying $d(x, y) = \top \Longrightarrow x = y$ for all $x, y \in X$. If $(X, d_X)$ and $(Y, d_Y)$ are $\Q$-graphs, then a \emph{$\Q$-functor} or \emph{$\Q$-contraction} $f : (X, d_X) \to (Y, d_Y)$ is a function $f : X \to Y$ such that $d_X(x, x') \leq d_Y(f(x), f(x'))$ for all $x, x' \in X$. We write $\Q\text{-}\Gph$ for the concrete category of $\Q$-graphs and $\Q$-functors, and we write $\Q\RGph$ (resp.~$\Q\Cat$, $\PMet_\Q$, $\Met_\Q$) for the full subcategory of $\Q\text{-}\Gph$ consisting of the reflexive $\Q$-graphs (resp.~the $\Q$-categories, the pseudo-$\Q$-metric spaces, the $\Q$-metric spaces).

Let $\Pi_\Q$ be the relational signature that consists of binary relation symbols $\sim_q$ for all $q \in \Q$. We write $\T_{\Q\text{-}\Gph}$ for the relational Horn theory over $\Pi_\Q$ that consists of the axioms $v_1 \sim_q v_2 \Longrightarrow v_1 \sim_{q'} v_2$ for all $q, q' \in \Q$ with $q \geq q'$, together with the axioms $\{v_1 \sim_{q_i} v_2 \mid i \in I\} \Longrightarrow v_1 \sim_{\bigvee_i q_i} v_2$ for all small families $\left(q_i\right)_{i \in I}$ of elements of $\Q$. We write $\T_{\Q\RGph}$ for the relational Horn theory over $\Pi_\Q$ that extends $\T_{\Q\text{-}\Gph}$ by adding the single axiom $\Longrightarrow v \sim_\top v$. We write $\T_{\Q\Cat}$ for the relational Horn theory over $\Pi_\Q$ that extends $\T_{\Q\RGph}$ by adding the axioms $v_1 \sim_q v_2, v_2 \sim_{q'} v_3 \Longrightarrow v_1 \sim_{q \wedge q'} v_3$ for all $q, q' \in \Q$. We let $\T_{\PMet_\Q}$ be the relational Horn theory over $\Pi_\Q$ that extends $\T_{\Q\Cat}$ by adding the axioms $v_1 \sim_q v_2 \Longrightarrow v_2 \sim_q v_1$ for all $q \in \Q$. Finally, we let $\T_{\Met_\Q}$ be the relational Horn theory over $\Pi_\Q$ that extends $\T_{\PMet_\Q}$ by adding the single axiom $v_1 \sim_\top v_2 \Longrightarrow v_1 \doteq v_2$. It is shown in \cite[Appendix]{Extensivity} that $\T_{\Q\RGph}\Mod$ (resp.~$\T_{\Q\Cat}\Mod$, $\T_{\PMet_\Q}\Mod$, $\T_{\Met_\Q}\Mod$) is isomorphic in $\mathsf{CAT}/\Set$ to $\Q\RGph$ (resp.~$\Q\Cat$, $\PMet_\Q$, $\Met_\Q$). 

Given $\T \in \left\{\T_{\Q\RGph}, \T_{\Q\Cat}, \T_{\PMet_\Q}, \T_{\Met_\Q}\right\}$, the relational Horn theory $\T$ is readily seen to be reflexive. Moreover, the category $\Q\RGph \cong \T_{\Q\RGph}\Mod$ is cartesian closed by \cite[Theorem 7.17]{Exp_relational}, while the remaining categories are cartesian closed by \cite[Theorem 6.20]{Exp_relational}. 

Note that when $\Q$ is the complete Heyting algebra $([0, \infty], \geq)$ (the extended real half-line with $\geq$) with binary meets given by maxima, then $\Met_\Q = \mathsf{Ult}\Met$, the category of \emph{(extended) ultrametric spaces} (allowing infinite distances).

\item The following example originates from \cite{Dubucquasitopoi} (see also \cite[Page 331]{Rosickyconcrete}). A category $\bbC$ is \emph{well-pointed} if it has a terminal object $1$ such that $|-| \cong \bbC(1, -) : \bbC \to \Set$ is faithful, thus exhibiting $\bbC$ as a concrete category over $\Set$. A \emph{finitary concrete site} $(\bbC, \calJ)$ is a well-pointed category $\bbC$ such that for each object $C$ of $\bbC$, the set $|C|$ is finite, together with a Grothendieck topology $\calJ$ on $\bbC$ whose covering families are all \emph{jointly surjective}. Given a finitary concrete site $(\bbC, \calJ)$ and a set $S$, a \emph{plot (in $S$)}\footnote{This terminology comes from \cite{Baezsmooth}.} is a function $p : |C| \to S$ for some object $C$ of $\bbC$. A class $\mathscr{P}$ of plots in $S$ is \emph{admissible} if it satisfies the following conditions:
\begin{itemize}[leftmargin=*]
\item $\scrP$ contains all constant plots; i.e.~for each object $C$ of $\bbC$, each constant plot $|C| \to S$ is in $\scrP$. 
\item $\scrP$ is closed under precomposition with morphisms of $\bbC$; i.e.~for each morphism $h : C \to D$ of $\bbC$ and each plot $p : |D| \to S$ in $\scrP$, the plot $p \circ h : |C| \to S$ is in $\scrP$.  
\item $\scrP$ satisfies the \emph{sheaf} (or \emph{gluing}) \emph{condition}: for each object $C$ of $\bbC$ and each covering family $\left(h_j : C_j \to C\right)_{j \in J} \in \calJ(C)$, a plot $p : |C| \to S$ is in $\scrP$ if each plot $p \circ h_j : \left|C_j\right| \to S$ ($j \in J$) is in $\scrP$. 
\end{itemize}
A \emph{quasispace $X$} (\emph{on} $(\bbC, \calJ)$) is a set $|X|$ equipped with an admissible class of plots $\scrP_X$ in $|X|$. 
Given quasispaces $X$ and $Y$, a \emph{morphism of quasispaces} $f : X \to Y$ is a function $f : |X| \to |Y|$ that sends plots to plots, i.e.~for each object $C$ of $\bbC$ and each plot $p : |C| \to |X|$ in $\scrP_X$, the plot $f \circ p : |C| \to |Y|$ is in $\scrP_Y$. We write $\QuasiSp(\bbC, \calJ)$ for the category of quasispaces on $(\bbC, \calJ)$ and their morphisms, which is equipped with the forgetful functor $|-| : \QuasiSp(\bbC, \calJ) \to \Set$ that sends a quasispace to its underlying set. The category $\QuasiSp(\bbC, \calJ)$ is cartesian closed. 

Given a finitary concrete site $(\bbC, \calJ)$, we write $\Pi_{(\bbC, \calJ)}$ for the relational signature consisting of a relation symbol $R_C$ of arity $\#|C|$ (the cardinality of the finite set $|C|$) for each $C \in \ob\bbC$. We then write $\T_{(\bbC, \calJ)}$ for the reflexive relational Horn theory without equality over $\Pi_{(\bbC, \calJ)}$ that consists of the following axioms:
\begin{itemize}[leftmargin=*]
\item For each object $C$ of $\bbC$, the axiom \[ \Longrightarrow R_C v \ldots v. \]
\item For each morphism $h : C \to D$ of $\bbC$ with $|C| = \{c_1, \ldots, c_n\}$ and $|D| = \{d_1, \ldots, d_m\}$, the axiom \[ R_D v_{d_1} \ldots v_{d_m} \Longrightarrow R_C v_{h(c_1)} \ldots v_{h(c_n)}, \] where the variables $v_{d_1} \ldots v_{d_m}$ are pairwise distinct.
\item For each object $C$ of $\bbC$ with $|C| = \{c_1, \ldots, c_n\}$ and each covering family $(h_j : C_j \to C)_{j \in J} \in \calJ(C)$ with $\left|C_j\right| = \{c_{j, 1}, \ldots, c_{j, n_j}\}$ for each $j \in J$, the axiom
\[ \left\{ R_{C} v_{h_j\left(c_{j, 1}\right)} \ldots v_{h_j\left(c_{j, n_j}\right)} \mid j \in J\right\} \Longrightarrow R_C v_{c_1} \ldots v_{c_n}, \] where the variables $v_{c_1} \ldots v_{c_n}$ are pairwise distinct.    
\end{itemize}
It is then straightforward to verify that $\T_{(\bbC, \calJ)}\Mod \cong \QuasiSp(\bbC, \calJ)$ as concrete categories over $\Set$. In detail, given a model $X$ of $\T_{(\bbC, \calJ)}$, one defines a corresponding quasispace $\widehat{X}$ with $\left|\widehat{X}\right| := |X|$ and \[ \scrP_{\widehat{X}} := \bigcup_{C \in \ob\bbC} \left\{ p : |C| = \{c_1, \ldots, c_n\} \to |X| \mid X \models R_C p(c_1) \ldots p(c_n)\right\}. \] Conversely, given a quasispace $X$, one defines a corresponding model $\widetilde{X}$ of $\T_{(\bbC, \calJ)}$ with $\left|\widetilde{X}\right| := |X|$ and 
\[ \sfE\left(\widetilde{X}\right) := \bigcup_{C \in \ob\bbC, |C| = \{c_1, \ldots, c_n\}} \left\{\left(R_C, \left(p(c_1), \ldots, p(c_n)\right)\right) \mid p : |C| \to |X| \text{ in } \scrP_X\right\}. \]
Since $\T_{(\bbC, \calJ)}$ is reflexive and $\T_{(\bbC, \calJ)}\Mod \cong \QuasiSp(\bbC, \calJ)$ is cartesian closed, we deduce that $\QuasiSp(\bbC, \calJ)$ is an example of our main setting for every finitary concrete site $(\bbC, \calJ)$.  

\item As a special case of the preceding example, the category $\Simp$ of \emph{(abstract) simplicial complexes} is an example of our main setting, because $\Simp$ is equivalent to the category of quasispaces on the finitary concrete site $\left(\mathsf{FinCard}^+, \mathsf{Triv}\right)$ consisting of the well-pointed category $\mathsf{FinCard}^+$ of positive finite cardinals (and all functions between them) equipped with the trivial Grothendieck topology $\mathsf{Triv}$ (see e.g.~\cite[Proposition 4.18]{Baezsmooth}). However, the concrete category $\Simp$ is usually presented in the following way. Given a set $S$, a \emph{simplex (in $S$)} is a non-empty finite subset of $S$. A set $\mathscr{S}$ of simplices in $S$ is \emph{admissible} if it contains all singleton simplices and is downward closed (i.e.~if $s, s'$ are simplices in $S$ with $s' \subseteq s$ and $s \in \scrS$, then $s' \in \scrS$). A(n) \emph{(abstract) simplicial complex} $X$ is a set $|X|$ equipped with an admissible set of simplices $\scrS_X$ in $|X|$. Given simplicial complexes $X$ and $Y$, a \emph{morphism of simplicial complexes $f : X \to Y$} is a function $f : |X| \to |Y|$ that sends simplices to simplices, i.e.~for each $s \in \scrS_X$ we have $f[s] \in \scrS_Y$. We write $\Simp$ for the category of simplicial complexes and their morphisms, which is equipped with the forgetful functor $|-| : \Simp \to \Set$ that sends a simplicial complex to its underlying set. 

The corresponding relational signature $\Pi_\Simp = \Pi_{\left(\mathsf{FinCard}^+, \mathsf{Triv}\right)}$ then consists of one relation symbol $R_n$ of arity $n$ for each natural number $n \geq 1$, and the corresponding relational Horn theory $\T_\Simp = \T_{\left(\mathsf{FinCard}^+, \mathsf{Triv}\right)}$ without equality over $\Pi_\Simp$ consists of the following (simplified) axioms:
\begin{itemize}[leftmargin=*]
\item For each $n \geq 1$, the axiom \[ \Longrightarrow R_n v \ldots v. \]

\item For all $n, m \geq 1$ and each function $h : \{1, \ldots, n\} \to \{1, \ldots, m\}$, the axiom \[ R_m v_{1} \ldots v_{m} \Longrightarrow R_n v_{h(1)} \ldots v_{h(n)}, \] where the variables $v_{1} \ldots v_{m}$ are pairwise distinct.
\end{itemize} 
It is then straightforward to verify that $\T_\Simp\Mod \cong \Simp$ as concrete categories over $\Set$. In detail, given a model $X$ of $\T_\Simp$, one defines a corresponding simplicial complex $\widehat{X}$ on the same underlying set $|X|$ with 
\[ \scrS_{\widehat{X}} := \bigcup_{n \geq 1} \left\{\{x_1, \ldots, x_n\} \subseteq |X| \mid X \models R_n x_1\ldots x_n\right\}. \] Conversely, given a simplicial complex $X$, one defines a corresponding model $\widetilde{X}$ of $\T_\Simp$ on the same underlying set $|X|$ with $\widetilde{X} \models R_n x_1 \ldots x_n$ ($n \geq 1, x_1, \ldots, x_n \in |X|$) iff $\{x_1, \ldots, x_n\} \in \scrS_X$. 
\end{enumerate}
}
\end{ex_sub}

\noindent For the remainder of \S\ref{rel_subsection} we fix a reflexive relational Horn theory $\T$ for which $\T\Mod$ is cartesian closed, so that $\T\Mod$ is an example of our main setting by \ref{rel_ter_para}. Our objective is now to show (in Propositions \ref{rel_prop_1} and \ref{rel_prop_2}) that ($\T\Mod$)-enriched $\calS$-sorted equational theories admit an even more concrete, syntactic formulation in terms of \emph{classical $\calS$-sorted equational theories with $\T$-relations} \eqref{rel_theory}. 

\begin{defn_sub}
\label{sig_pi_relation}
Let $\Sigma$ be a classical $\calS$-sorted signature \eqref{signature}. Given an $\calS$-sorted variable context $\vbar$ and a sort $S \in \calS$, a \textbf{$(\Sigma, \Pi)$-relation of sort $S$ in context $\vbar$} is a $\Pi$-edge in the set $\Term(\Sigma; \vbar)_S$ of $\Sigma$-terms of sort $S$ in context $\vbar$ \eqref{syntax_para}. We write a $(\Sigma, \Pi)$-relation of sort $S$ in context $\vbar$ as \[ \left[\vbar \vdash Rt_1\ldots t_n : S\right] \] rather than as $\left(R, \left([\vbar \vdash t_1 : S], \ldots, [\vbar \vdash t_n : S]\right)\right)$. 

A \textbf{$(\Sigma, \Pi)$-relation in context $\vbar$} is a $(\Sigma, \Pi)$-relation of sort $S$ in context $\vbar$ for some sort $S \in \calS$, while a \textbf{$(\Sigma, \Pi)$-relation (in context)} is a $(\Sigma, \Pi)$-relation in context $\vbar$ for some context $\vbar$. 
\end{defn_sub}

\noindent Recall from \ref{syntax_para} that given a classical $\calS$-sorted signature $\Sigma$, a $\Sigma$-algebra $A$, $J \in \N_\calS$, and a $\Sigma$-term $\left[\vbar : J \vdash t : S\right]$ of sort $S$ in context $\vec{v} : J$ \eqref{notation_para}, we write $t^A : A^J \to A_S$ for the $\V$-morphism $\left[\vbar : J \vdash t : S\right]^A$.  

\begin{defn_sub}
\label{sig_pi_relation_sat}
Let $\Sigma$ be a classical $\calS$-sorted signature \eqref{signature}, let $A$ be a $\Sigma$-algebra, let $J \in \N_\calS$, and let $\left[\vbar : J \vdash Rt_1\ldots t_n : S\right]$ be a $(\Sigma, \Pi)$-relation in context $\vbar : J$ \eqref{notation_para}. We say that $A$ \textbf{satisfies} the $(\Sigma, \Pi)$-relation $\left[\vbar : J \vdash Rt_1\ldots t_n : S\right]$ if \[ \T\Mod\left(A^J, A_S\right) \models Rt_1^A \ldots t_n^A, \]
recalling that $\T\Mod\left(A^J, A_S\right)$ is a $\T$-model (as an internal hom of the cartesian closed category $\T\Mod$).    
\end{defn_sub}

\begin{defn_sub}
\label{rel_theory}
A \textbf{classical $\calS$-sorted equational theory with $\T$-relations} is a triple $\calP = (\Sigma, \calR, \calE)$ consisting of a classical $\calS$-sorted signature $\Sigma$ \eqref{signature}, a set $\calR$ of $(\Sigma, \Pi)$-relations in context \eqref{sig_pi_relation}, and a set $\calE$ of syntactic $\Sigma$-equations in context \eqref{syntax_para}. A \textbf{$\calP$-algebra} is a $\Sigma$-algebra that satisfies each $(\Sigma, \Pi)$-relation in $\calR$ \eqref{sig_pi_relation_sat} and each syntactic $\Sigma$-equation in $\calE$ \eqref{sat_synt}. We write $\calP\Alg \hookrightarrow \Sigma\Alg$ for the full sub-($\T\Mod$)-category consisting of the $\calP$-algebras, which is equipped with a faithful $(\T\Mod)$-functor $U^\calP : \calP\Alg \to \T\Mod^\calS$ obtained by restricting $U^\Sigma : \Sigma\Alg \to \T\Mod^\calS$. When $\calS$ is a singleton, we instead say \textbf{classical single-sorted equational theory with $\T$-relations}.
\end{defn_sub}

\noindent In the next two results, we show that ($\T\Mod$)-enriched $\calS$-sorted equational theories have the same ``expressive power'' as classical $\calS$-sorted equational theories with $\T$-relations. 

\begin{prop_sub}
\label{rel_prop_1}
For every ($\T\Mod$)-enriched $\calS$-sorted equational theory $\calT = (\Sigma, \calE)$, there is a classical $\calS$-sorted equational theory with $\T$-relations $\calP_\calT$ such that $\calT\Alg \cong \calP_\calT\Alg$ in $(\T\Mod)\CAT/\T\Mod^\calS$.
\end{prop_sub}

\begin{proof}
The classical $\calS$-sorted signature for $\calP_\calT$ is $|\Sigma|$, the underlying classical $\calS$-sorted signature of $\Sigma$ \eqref{ord_sig}. To define the set $\calR$ of $(|\Sigma|, \Pi)$-relations for $\calP_\calT$, we consider for each operation symbol $\sigma \in \Sigma$ of type $(J, S, P)$ and each $\Pi$-edge $P \models Rp_1\ldots p_n$ of $P$ ($R \in \Pi$) the $(|\Sigma|, \Pi)$-relation $\left[\vbar : J \vdash R\sigma_{p_1}\left(\vbar\right)\ldots\sigma_{p_n}\left(\vbar\right) : S\right]$ of sort $S$ in context $\vbar : J$ \eqref{notation_para}. We then define $\calR$ to consist of precisely all such $(|\Sigma|, \Pi)$-relations. We set $\calP_\calT := \left(|\Sigma|, \calR, \calE\right)$, using the same set $\calE$ of syntactic $|\Sigma|$-equations from $\calT$. 

We now show that $\calT\Alg \cong \calP_\calT\Alg$ in $(\T\Mod)\CAT/\T\Mod^\calS$. Given a $\calT$-algebra $A$, we have the underlying $|\Sigma|$-algebra $|A|$ \eqref{ord_sig_para} with the same carrier object $A$, and we show that $|A|$ is a $\calP_\calT$-algebra. The $|\Sigma|$-algebra $|A|$ already satisfies all of the syntactic $|\Sigma|$-equations in $\calE$, because the $\Sigma$-algebra $A$ is a $\calT$-algebra. Now let $\sigma \in \Sigma$ be of type $(J, S, P)$, let $P \models Rp_1\ldots p_n$ be a $\Pi$-edge of $P$, and let us show that $|A|$ satisfies the $(|\Sigma|, \Pi)$-relation $\left[\vbar : J \vdash R\sigma_{p_1}\left(\vbar\right)\ldots\sigma_{p_n}\left(\vbar\right) : S\right]$ of sort $S$ in context $\vbar : J$. So we must show that 
\[ \T\Mod\left(A^J, A_S\right) \models R\sigma^A\left(p_1\right)\ldots\sigma^A\left(p_n\right), \] and this is true because $P \models Rp_1\ldots p_n$ and $\sigma^A : P \to \T\Mod\left(A^J, A_S\right)$ is a $\Pi$-morphism. Thus, the $|\Sigma|$-algebra $|A|$ is a $\calP_\calT$-algebra. 

Conversely, let $A$ be a $\calP_\calT$-algebra, and let us define a corresponding $\calT$-algebra $A$ with the same carrier object. For each $\sigma \in \Sigma$ of type $(J, S, P)$, we define
\[ \sigma^A : P \to \T\Mod\left(A^J, A_S\right) \]
\[ p \mapsto \sigma_p^A, \] and $\sigma^A$ is a $\Pi$-morphism because the $|\Sigma|$-algebra $A$ satisfies each $(|\Sigma|, \Pi)$-relation in $\calR$. This gives the carrier object $A$ the structure of a $\Sigma$-algebra, which is a $\calT$-algebra because the underlying $|\Sigma|$-algebra is the given $\calP_\calT$-algebra $A$.

Now recall from \ref{ord_sig_para} that we have a fully faithful $(\T\Mod)$-functor $|-|^\Sigma : \Sigma\Alg \to |\Sigma|\Alg$ with $U^{|\Sigma|} \circ |-|^\Sigma = U^\Sigma$. The foregoing establishes that this $(\T\Mod$)-functor restricts and corestricts to a fully faithful $(\T\Mod)$-functor $\calT\Alg \to \calP_\calT\Alg$ that is bijective-on-objects and thus an isomorphism in $(\T\Mod)\CAT/\T\Mod^\calS$, as desired.   
\end{proof}

\begin{prop_sub}
\label{rel_prop_2}
For every classical $\calS$-sorted equational theory with $\T$-relations $\calP$, there is a ($\T\Mod$)-enriched $\calS$-sorted equational theory $\calT_\calP$ such that $\calP\Alg \cong \calT_\calP\Alg$ in $(\T\Mod)\CAT/\T\Mod^\calS$.
\end{prop_sub}

\begin{proof}
Let $\calP = (\Sigma, \calR, \calE)$. For each pair $(J, S) \in \N_\calS \times \calS$, we consider all the $(\Sigma, \Pi)$-relations $\left[\vbar : J \vdash Rt_1\ldots t_n : S\right]$ in $\calR$ of sort $S$ in context $\vbar : J$ \eqref{notation_para}. We first write $\Term(\Sigma; \vbar : J)_S^\calR$ for the subset of $\Term(\Sigma; \vbar : J)_S$ (the set of $\Sigma$-terms of sort $S$ in context $\vbar : J$) that consists of precisely those $\Sigma$-terms of sort $S$ in context $\vec{v} : J$ that occur in some $(\Sigma, \Pi)$-relation in $\calR$. We then write $P_{J, S}$ for the $\Pi$-structure that we obtain by equipping this set $\Term(\Sigma; \vbar : J)_S^\calR$ with precisely the $\Pi$-edges corresponding to these $(\Sigma, \Pi)$-relations $\left[\vbar : J \vdash Rt_1\ldots t_n : S\right]$ from $\calR$. We then write $P_{J, S}^\sharp$ for the free $\T$-model \eqref{rel_ter_para} on the $\Pi$-structure $P_{J, S}$, so that we have a unit $\Pi$-morphism $\eta_{J, S} : P_{J, S} \to P_{J, S}^\sharp$. We then augment the classical $\calS$-sorted signature $\Sigma$ to obtain a ($\T\Mod$)-enriched $\calS$-sorted signature $\Sigma^+$ by adjoining, for each pair $\left(J, S\right) \in \N_\calS \times \calS$, an operation symbol $\sigma_{J, S}$ of type $\left(J, S, P_{J, S}^\sharp\right)$. We also augment the set $\calE$ of syntactic $\Sigma$-equations to obtain a set $\calE^+$ of syntactic $\left|\Sigma^+\right|$-equations by adjoining, for each pair $\left(J, S\right) \in \N_\calS \times \calS$, each $(\Sigma, \Pi)$-relation $\left[\vbar : J \vdash Rt_1\ldots t_n : S\right]$ in $\calR$ of sort $S$ in context $\vbar : J$, and each $1 \leq i \leq n$, the following syntactic $\left|\Sigma^+\right|$-equation of sort $S$ in context $\vbar : J$:
\begin{equation}
\label{T_P_eq}
\left[\vbar : J \vdash \left(\sigma_{J, S}\right)_{\eta_{J, S}\left(t_i\right)}\left(\vbar\right) \doteq t_i : S\right],
\end{equation} 
recalling that $\eta_{J, S} : P_{J, S} \to P_{J, S}^\sharp$ and that $\left|P_{J, S}\right| = \Term(\Sigma; \vbar : J)_S^\calR$. We then define $\calT_\calP := \left(\Sigma^+, \calE^+\right)$. Note that for each $\calT_\calP$-algebra $A$ and each $(J, S) \in \N_\calS \times \calS$, the $\Pi$-morphism
\[ \sigma_{J, S}^A \circ \eta_{J, S} : P_{J, S} \to \T\Mod\left(A^J, A_S\right) \] is given by the rule $\left[\vec{v} : J \vdash t : S\right] \mapsto t^A$ for each $\left[\vec{v} : J \vdash t : S\right] \in \Term(\Sigma; \vbar : J)_S^\calR$, because the $\calT_\calP$-algebra $A$ satisfies the syntactic equations \eqref{T_P_eq}.  

We now show that $\calP\Alg \cong \calT_\calP\Alg$ in $(\T\Mod)\CAT/\T\Mod^\calS$. Given a $\calP$-algebra $A$, we define a corresponding $\calT_\calP$-algebra $A^+$ with the same carrier object. For each $\sigma \in \Sigma \subseteq \Sigma^+$, we set $\sigma^{A^+} := \sigma^A$. For each pair $\left(J, S\right) \in \N_\calS \times \calS$, we define
\[ \sigma_{J, S}^{A^+} : P_{J, S}^\sharp \to \T\Mod\left(A^J, A_S\right) \] to be the unique morphism of $\T$-models that is induced by the $\Pi$-morphism $P_{J, S} \to \T\Mod\left(A^J, A_S\right)$ given by 
\[ [\vbar : J \vdash t : S] \mapsto t^A, \] which is a $\Pi$-morphism because the $\calP$-algebra $A$ satisfies each $(\Sigma, \Pi)$-relation in $\calR$. This equips $A$ with the structure of a $\Sigma^+$-algebra $A^+$, which satisfies every syntactic $\left|\Sigma^+\right|$-equation in $\calE \subseteq \calE^+$ because the $\Sigma$-algebra $A$ is a $\calP$-algebra. The $\Sigma^+$-algebra $A^+$ satisfies each syntactic $\left|\Sigma^+\right|$-equation in $\calE^+$ of the form \eqref{T_P_eq}, because its satisfaction is equivalent to the trivial equality $t_i^A = t_i^A : A^J \to A_S$. Thus, the $\Sigma^+$-algebra $A^+$ is a $\calT_\calP$-algebra.   

Conversely, let $A$ be a $\calT_\calP$-algebra, and let us define a corresponding $\calP$-algebra $A^-$ with the same carrier object. Since $\Sigma \subseteq \Sigma^+$, the carrier object $A$ already carries the structure of a $\Sigma$-algebra $A^-$. Furthermore, since $\calE \subseteq \calE^+$, the $\Sigma$-algebra $A^-$ satisfies each syntactic $\Sigma$-equation in $\calE$. So it remains to show that the $\Sigma$-algebra $A^-$ satisfies each $(\Sigma, \Pi)$-relation of $\calR$. For each pair $\left(J, S\right) \in \N_\calS \times \calS$, let $\left[\vbar : J \vdash Rt_1\ldots t_n : S\right]$ be a $(\Sigma, \Pi)$-relation in $\calR$ of sort $S$ in context $\vbar : J$. To show that the $\Sigma$-algebra $A^-$ satisfies this $(\Sigma, \Pi)$-relation, we must show that
\[ \T\Mod\left(A^J, A_S\right) \models Rt_1^A\ldots t_n^A. \] Since $A$ is a $\calT_\calP$-algebra, we have the $\Pi$-morphism
\[ \sigma_{J, S}^A : P_{J, S}^\sharp \to \T\Mod\left(A^J, A_S\right). \] Since $P_{J, S}^\sharp \models R\eta_{J, S}\left(t_1\right)\ldots \eta_{J, S}\left(t_n\right)$ (because $P_{J, S} \models Rt_1\ldots t_n$ and $\eta_{J, S} : P_{J, S} \to P_{J, S}^\sharp$ is a $\Pi$-morphism), we then deduce that
\[ \T\Mod\left(A^J, A_S\right) \models R\sigma_{J, S}^A\left(\eta_{J, S}\left(t_1\right)\right)\ldots \sigma_{J, S}^A\left(\eta_{J, S}\left(t_n\right)\right). \] Since $A$ is a $\calT_\calP$-algebra and therefore satisfies each syntactic equation \eqref{T_P_eq}, we have $\sigma_{J, S}^A\left(\eta_{J, S}\left(t_i\right)\right) = t_i^A$ for each $1 \leq i \leq n$, which then yields $\T\Mod\left(A^J, A_S\right) \models Rt_1^A\ldots t_n^A$, as desired. We have thus established a bijective correspondence between $\calP$-algebras and $\calT_\calP$-algebras. Indeed, we clearly have $\left(A^+\right)^- = A$ for every $\calP$-algebra $A$. And to show that $\left(A^-\right)^+ = A$ for every $\calT_\calP$-algebra $A$, it suffices to show for each pair $(J, S) \in \N_\calS \times \calS$ that $\sigma_{J, S}^{\left(A^-\right)^+} = \sigma_{J, S}^A : P_{J, S}^\sharp \to \T\Mod\left(A^J, A_S\right)$, for which it in turn suffices to show that $\sigma_{J, S}^{\left(A^-\right)^+} \circ \eta_{J, S} = \sigma_{J, S}^A \circ \eta_{J, S} : P_{J, S} \to \T\Mod\left(A^J, A_S\right)$, which is true because both functions are given by the rule $\left[\vec{v} : J \vdash t : S\right] \mapsto t^A$.

Since $\Sigma \subseteq \Sigma^+$, it is straightforward to define a canonical strongly faithful\footnote{In the sense that each structural morphism $\Sigma^+\Alg(A, B) \to \Sigma\Alg\left(A^-, B^-\right)$ ($A, B \in \Sigma^+\Alg$) is a strong monomorphism.} $(\T\Mod)$-functor $\Sigma^+\Alg \to \Sigma\Alg$ in $(\T\Mod)\CAT/\T\Mod^\calS$ that sends a $\Sigma^+$-algebra $A$ to the $\Sigma$-algebra $A^-$ defined above. The foregoing then establishes that this $(\T\Mod)$-functor restricts and corestricts to a strongly faithful $(\T\Mod)$-functor $\calT_\calP\Alg \to \calP\Alg$ in $(\T\Mod)\CAT/\T\Mod^\calS$ that is bijective-on-objects. To show that each structural morphism $\calT_\calP\Alg(A, B) \rightarrowtail \calP\Alg\left(A^-, B^-\right)$ $(A, B \in \calT_\calP\Alg$) is an isomorphism, it then suffices to show that it is surjective and thus epimorphic (since $|-| : \V \to \Set$ is faithful). So let $A$ and $B$ be $\calT_\calP$-algebras, let $f : A^- \to B^-$ be a morphism of the corresponding $\calP$-algebras, and let us show that $f : A \to B$ is also a morphism of $\calT_\calP$-algebras, i.e.~of $\Sigma^+$-algebras. Since $f$ is already a morphism of $\Sigma$-algebras, it remains to show for each $(J, S) \in \N_\calS \times \calS$ that the following diagram commutes: 
\[\begin{tikzcd}
	{P_{J, S}^\sharp} &&& {\T\Mod\left(B^J, B_S\right)} \\
	\\
	{\T\Mod\left(A^J, A_S\right)} &&& {\T\Mod\left(A^J, B_S\right)},
	\arrow["{\sigma_{J, S}^B}", from=1-1, to=1-4]
	\arrow["{\sigma_{J, S}^A}"', from=1-1, to=3-1]
	\arrow["{\T\Mod\left(f^J, 1\right)}", from=1-4, to=3-4]
	\arrow["{\T\Mod\left(1, f_S\right)}"', from=3-1, to=3-4]
\end{tikzcd}\]
for which it suffices to show that the two composites are equal when precomposed with $\eta_{J, S} : P_{J, S} \to P^\sharp_{J, S}$. It therefore suffices to show for each $\Sigma$-term $[\vbar : J \vdash t : S]$ of sort $S$ in context $\vbar : J$ in $\Term(\Sigma; \vbar : J)_S^\calR$ that $f_S \circ t^A = t^B \circ f^J : A^J \to B_S$, which is true by \eqref{synt_to_nat_square}. 
\end{proof}

\begin{rmk_sub}
\label{pos_rmk}
Let $\T$ be the relational Horn theory for posets (as in Example \ref{rel_examples}), so that $\T\Mod$ is the cartesian closed category $\Pos$ of posets and monotone functions. When $\calS$ is a singleton, the $\Pos$-categories of algebras for classical single-sorted equational theories with $\T$-relations \eqref{rel_theory} are precisely the \emph{varieties of ordered algebras} considered in \cite{categoricalviewordered} (see \cite[Definition 21 and Remark 22]{categoricalviewordered}) and \cite[\S 5]{Bloom_varieties}; for several examples, see \cite[Example 23]{categoricalviewordered}. In view of Propositions \ref{rel_prop_1} and \ref{rel_prop_2}, our Theorem \ref{variety_thm}, specialized to the case where $\V = \Pos$ and $\calS$ is a singleton, thus recovers \cite[Theorem 41]{categoricalviewordered}. For several examples of classical single-sorted equational theories with $\T$-relations in this context, see \cite[Example 23]{categoricalviewordered}.

Now let $\T$ be the relational Horn theory for (extended) ultrametric spaces (as in Example \ref{rel_examples}), so that $\T\Mod$ is the cartesian closed category $\mathsf{UltMet}$ of (extended) ultrametric spaces. When $\calS$ is a singleton, the $\mathsf{UltMet}$-categories of algebras for classical single-sorted equational theories with $\T$-relations \eqref{rel_theory} are precisely the \emph{varieties of ultra-quantitative algebras} considered in \cite{strongly_finitary_quant} (see \cite[Definition 32]{strongly_finitary_quant}); for some examples, see \cite[Example 33]{strongly_finitary_quant}. In view of Propositions \ref{rel_prop_1} and \ref{rel_prop_2}, our Theorem \ref{variety_thm}, specialized to the case where $\V = \mathsf{UltMet}$ and $\calS$ is a singleton, thus recovers \cite[Theorem 54]{strongly_finitary_quant}. 
\end{rmk_sub}

\begin{rmk_sub}
In future work, we shall study classical $\calS$-sorted equational theories with $\T$-relations \eqref{rel_theory} more thoroughly and in a more general setting, where we shall take $\T$ to be an \emph{arbitrary} relational Horn theory (i.e.~$\T$ need not be reflexive and $\T\Mod$ need not be cartesian closed, \ref{rel_ter_para}). In this future work we shall provide many (more) examples of classical $\calS$-sorted equational theories with $\T$-relations, and we shall also examine the precise relationship between, on the one hand, the classical single-sorted equational theories with $\T$-relations, and, on the other hand, the (single-sorted) \emph{relational algebraic theories} of \cite{Monadsrelational} and the (single-sorted) \emph{quantitative equational theories} of \cite{Quant_alg_reasoning}. 
\end{rmk_sub}

\subsection{Cartesian closed topological, monotopological, and solid categories over $\Set$}
\label{monotop_subsection}

Every cartesian closed category $\V$ whose representable functor $\V(1, -) : \V \to \Set$ is \emph{solid} in the sense of \cite[Definition 25.10]{AHS} is an example of our main setting \eqref{given_data}. This is because $\V$ is then complete and cocomplete \cite[Corollary 25.16]{AHS}, and the functor $\V(1, -) : \V \to \Set$ is faithful \cite[Proposition 25.12]{AHS}. In particular, if $\V(1, -) : \V \to \Set$ is \emph{topological} \cite[Chapter 21]{AHS} or even \emph{monotopological} \cite[Definition 21.38]{AHS}, then $\V(1, -)$ is solid by \cite[Example 25.2 and Proposition 25.11]{AHS}.

Given a relational Horn theory $\T$ \eqref{Horn_theory}, the category $\T\Mod$ is monotopological over $\Set$ by \cite[Proposition 5.5]{Rosickyconcrete}, and so the present class of examples subsumes the previous class of examples considered in \S\ref{rel_subsection}. Prominent examples of cartesian closed categories $\V$ for which $\V(1, -) : \V \to \Set$ is monotopological or even topological (and hence solid) include: the categories $\Top_\calC$ of \emph{$\calC$-generated topological spaces} for a \emph{productive} class $\calC$ of generating spaces considered in \cite{EscardoCCC} (including the category $\mathsf{CGTop}$ of compactly generated spaces); the category $\mathsf{CGHTop}$ of compactly generated Hausdorff spaces; and the categories of quasispaces over (not necessarily \emph{finitary}, \ref{rel_examples}) concrete sites studied in \cite{Dubucquasitopoi}, including the categories of convergence spaces, subsequential spaces, bornological sets, pseudotopological spaces, quasitopological spaces, diffeological spaces \cite{Baezsmooth}, and quasi-Borel spaces \cite{Heunenprobability}. 

When $\V(1, -) : \V \to \Set$ is topological, the $\V$-enriched $\calS$-sorted equational theories of Definition \ref{enriched_theory} coincide with the $\V$-enriched $\calS$-sorted equational theories considered in \cite{Initial_algebras} (where $\V$ is assumed to be topological over $\Set$ but not necessarily cartesian closed); see \cite[Definition 4.1.4 and Theorem 4.1.17]{Initial_algebras}. For many examples of $\V$-enriched $\calS$-sorted equational theories in this setting, see \cite[\S 5]{Initial_algebras}.

\subsection{$\oCPO$ and $\DCPO$}
\label{CPO_subsection}

Recall that an \emph{$\omega$-complete partial order}, or \emph{$\omega$-cpo}, is a partially ordered set in which every $\omega$-chain has a supremum, while a \emph{directed complete partial order}, or \emph{dcpo}, is a partially ordered set in which every directed subset has a supremum. We write $\oCPO$ (resp.~$\DCPO$) for the category whose objects are $\omega$-cpos (resp.~dcpos) and whose morphisms are the \emph{$\omega$-continuous} (resp.~\emph{directed-continuous}) functions, i.e.~the monotone functions that preserve suprema of $\omega$-chains (resp.~directed subsets). The categories $\oCPO$ and $\DCPO$ are well known to be complete, cocomplete, and cartesian closed. The forgetful functor $|-| : \oCPO \to \Set$ that sends an $\omega$-cpo to its underlying set is faithful and represented by the terminal $\omega$-cpo $1$ (the discrete partially ordered set on a singleton set). The case for $\DCPO$ is identical, and so $\oCPO$ and $\DCPO$ are both examples of our main setting \eqref{given_data}. 

We now show, analogously to \S\ref{rel_subsection}, that ($\oCPO$)-enriched $\calS$-sorted equational theories admit an even more concrete, syntactic formulation in terms of \emph{classical $\calS$-sorted equational theories with ($\oCPO$)-relations} \eqref{CPO_theory} (we briefly discuss the case for $\DCPO$ in Remark \ref{dcpo_rmk}). 

\begin{defn_sub}
\label{CPO_relation}
Let $\Sigma$ be a classical $\calS$-sorted signature \eqref{signature}. Given an $\calS$-sorted variable context $\vbar$ and a sort $S \in \calS$, a \textbf{$(\Sigma, \oCPO)$-relation of sort $S$ in context $\vbar$} is a formal expression of the form 
\[ \left[\vbar \vdash \bigvee_{n \geq 0} t_n \doteq t : S\right], \] where $t_n, t \in \Term(\Sigma; \vbar)_S$ ($n \geq 0$). 

A \textbf{$(\Sigma, \oCPO)$-relation in context $\vbar$} is a $(\Sigma, \oCPO)$-relation of sort $S$ in context $\vbar$ for some sort $S \in \calS$, while a \textbf{$(\Sigma, \oCPO)$-relation (in context)} is a $(\Sigma, \oCPO)$-relation in context $\vbar$ for some context $\vbar$. 
\end{defn_sub}

\noindent Given an $\omega$-cpo $X$ and elements $x, y \in |X|$, we sometimes write $X \models x \leq y$ to mean that $x \leq y$ in the partially ordered set $X$. 

\begin{defn_sub}
\label{CPO_relation_sat}
Let $\Sigma$ be a classical $\calS$-sorted signature \eqref{signature}, let $A$ be a $\Sigma$-algebra, let $J \in \N_\calS$, and let $\left[\vbar : J \vdash \bigvee_{n \geq 0} t_n \doteq t : S\right]$ be a $(\Sigma, \oCPO)$-relation in context $\vbar : J$ \eqref{notation_para}. We say that $A$ \textbf{satisfies} the $(\Sigma, \oCPO)$-relation $\left[\vbar : J \vdash \bigvee_{n \geq 0} t_n \doteq t : S\right]$ if 
\[ \oCPO\left(A^J, A_S\right) \models t_n^A \leq t_{n+1}^A \] for all $n \geq 0$ and
\[ \bigvee_{n \geq 0} t_n^A = t^A \] in the $\omega$-cpo $\oCPO\left(A^J, A_S\right)$.      
\end{defn_sub}

\begin{defn_sub}
\label{CPO_theory}
A \textbf{classical $\calS$-sorted equational theory with ($\oCPO$)-relations} is a triple $\calP = (\Sigma, \calR, \calE)$ consisting of a classical $\calS$-sorted signature $\Sigma$ \eqref{signature}, a set $\calR$ of $(\Sigma, \oCPO)$-relations in context \eqref{CPO_relation}, and a set $\calE$ of syntactic $\Sigma$-equations in context \eqref{syntax_para}. A \textbf{$\calP$-algebra} is a $\Sigma$-algebra that satisfies each $(\Sigma, \oCPO)$-relation in $\calR$ \eqref{CPO_relation_sat} and each syntactic $\Sigma$-equation in $\calE$ \eqref{sat_synt}. We write $\calP\Alg \hookrightarrow \Sigma\Alg$ for the full sub-($\oCPO$)-category consisting of the $\calP$-algebras, which is equipped with a faithful ($\oCPO$)-functor $U^\calP : \calP\Alg \to \oCPO^\calS$ obtained by restricting the faithful ($\oCPO$)-functor $U^\Sigma : \Sigma\Alg \to \oCPO^\calS$. When $\calS$ is a singleton, we instead say \textbf{classical single-sorted equational theory with ($\oCPO$)-relations}.  
\end{defn_sub}

\noindent In Propositions \ref{CPO_prop_1} and \ref{CPO_prop_2}, we show that ($\oCPO$)-enriched $\calS$-sorted equational theories have the same ``expressive power'' as classical $\calS$-sorted equational theories with ($\oCPO$)-relations.

\begin{prop_sub}
\label{CPO_prop_1}
For every ($\oCPO$)-enriched $\calS$-sorted equational theory $\calT = (\Sigma, \calE)$, there is a classical $\calS$-sorted equational theory with ($\oCPO$)-relations $\calP_\calT$ such that $\calT\Alg \cong \calP_\calT\Alg$ in $(\oCPO)\CAT/\oCPO^\calS$.
\end{prop_sub}

\begin{proof}
The proof is similar to that of Proposition \ref{rel_prop_1}. The classical $\calS$-sorted signature for $\calP_\calT$ is $|\Sigma|$, the underlying classical $\calS$-sorted signature of $\Sigma$ \eqref{ord_sig}. To define the set $\calR$ of $(|\Sigma|, \oCPO)$-relations for $\calP_\calT$, we consider for each operation symbol $\sigma \in \Sigma$ of type $(J, S, P)$ and each
$\omega$-chain $\left(p_n\right)_{n \geq 0}$ in $P$ the $(|\Sigma|, \oCPO)$-relation 
\begin{equation}
\label{cpo_eq}
\left[\vbar : J \vdash \bigvee_{n \geq 0} \sigma_{p_n}\left(\vbar\right) \doteq \sigma_{\bigvee_n p_n}\left(\vbar\right) : S\right]
\end{equation} 
of sort $S$ in context $\vbar : J$ \eqref{notation_para}. We then define $\calR$ to consist of precisely these $(|\Sigma|, \oCPO)$-relations. We set $\calP_\calT := \left(|\Sigma|, \calR, \calE\right)$, using the set $\calE$ of syntactic $|\Sigma|$-equations from $\calT$. 

We now show that $\calT\Alg \cong \calP_\calT\Alg$ in $(\oCPO)\CAT/\oCPO^\calS$. Given a $\calT$-algebra $A$, we have the underlying $|\Sigma|$-algebra $|A|$ \eqref{ord_sig_para} with the same carrier object $A$, and we show that $|A|$ is a $\calP_\calT$-algebra. The $|\Sigma|$-algebra $|A|$ already satisfies all of the syntactic $|\Sigma|$-equations in $\calE$, because the $\Sigma$-algebra $A$ is a $\calT$-algebra. Now let $\sigma \in \Sigma$ be of type $(J, S, P)$, let $\left(p_n\right)_{n \geq 0}$ be an $\omega$-chain in $P$, and let us show that the $|\Sigma|$-algebra $|A|$ satisfies the $(|\Sigma|, \oCPO)$-relation \eqref{cpo_eq}
of sort $S$ in context $\vbar : J$. So we must show that $\sigma_{p_n}^A \leq \sigma_{p_{n+1}}^A$ for each $n \geq 0$ and that $\bigvee_{n \geq 0} \sigma_{p_n}^A = \sigma_{\bigvee_n p_n}^A$ in $\oCPO\left(A^J, A_S\right)$. Both claims are true because $\sigma^A : P \to \oCPO\left(A^J, A_S\right)$ is a morphism of $\omega$-cpos. Thus, the $|\Sigma|$-algebra $|A|$ is a $\calP_\calT$-algebra. 

Conversely, let $A$ be a $\calP_\calT$-algebra, and let us define a corresponding $\calT$-algebra $A$ with the same carrier object. For each $\sigma \in \Sigma$ of type $(J, S, P)$, we define
\[ \sigma^A : P \to \oCPO\left(A^J, A_S\right) \] 
\[ p \mapsto \sigma_p^A, \] and $\sigma^A$ is $\omega$-continuous because the $|\Sigma|$-algebra $A$ satisfies each $(|\Sigma|, \oCPO)$-relation in $\calR$. This gives the carrier object $A$ the structure of a $\Sigma$-algebra, which is a $\calT$-algebra because the underlying $|\Sigma|$-algebra is the given $\calP_\calT$-algebra $A$.  

Now recall from \ref{ord_sig_para} that we have a fully faithful $(\oCPO)$-functor $|-|^\Sigma : \Sigma\Alg \to |\Sigma|\Alg$ with $U^{|\Sigma|} \circ |-|^\Sigma = U^\Sigma$. The foregoing establishes that this $(\oCPO$)-functor restricts and corestricts to a fully faithful $(\oCPO)$-functor $\calT\Alg \to \calP_\calT\Alg$ that is bijective-on-objects and thus an isomorphism in $(\oCPO)\CAT/\oCPO^\calS$, as desired.      
\end{proof}

\noindent To establish a converse to Proposition \ref{CPO_prop_1}, we shall require the notion of \emph{$\omega$-cpo presentation} from \cite[\S 3]{Synth_top_prob} (adapted from the notion of \emph{dcpo presentation} developed in \cite{pres_dcpos}).

\begin{defn_sub}[\cite{Synth_top_prob, pres_dcpos}]
\label{cpo_pres}
An \textbf{$\omega$-cpo presentation} is a pair $(P, \lhd)$ consisting of a preordered set $P = \left(|P|, \leq\right)$ and a \textbf{cover relation} $\lhd \subseteq |P| \times \mathsf{Pow}(|P|)$ such that for all $(p, U) \in |P| \times \Pow(|P|)$ we have $p \lhd U$ only if $U$ is an $\omega$-chain (when equipped with the preorder inherited from $P$). A \textbf{morphism of $\omega$-cpo presentations} $f : (P, \lhd) \to (P', \lhd')$ is a monotone function $f : P \to P'$ that \emph{preserves covers}, in the sense that $p \lhd U$ implies $f(p) \lhd f[U]$ for all $(p, U) \in |P| \times \Pow(|P|)$.

Every $\omega$-cpo $P$ can be regarded as an $\omega$-cpo presentation $(P, \lhd)$ with $p \lhd U$ iff $U \subseteq |P|$ is an $\omega$-chain and $p \leq \bigvee U$.  
\end{defn_sub} 

\begin{prop_sub}[\cite{Synth_top_prob, pres_dcpos}]
\label{free_cpo_prop}
Let $(P, \lhd)$ be an $\omega$-cpo presentation. Then there is a free $\omega$-cpo over $(P, \lhd)$, i.e.~there is an $\omega$-cpo $P_\omega$ and a morphism $\eta_P : (P, \lhd) \to P_\omega$ of $\omega$-cpo presentations such that for any $\omega$-cpo $X$ and any morphism of $\omega$-cpo presentations $f : (P, \lhd) \to X$, there is a unique morphism $f^\sharp : P_\omega \to X$ of $\omega$-cpos such that $f^\sharp \circ \eta_P = f$. \qed
\end{prop_sub}

\noindent We can now establish the following converse to Proposition \ref{CPO_prop_1}.

\begin{prop_sub}
\label{CPO_prop_2}
For every classical $\calS$-sorted equational theory with ($\oCPO$)-relations $\calP$, there is an ($\oCPO$)-enriched $\calS$-sorted equational theory $\calT_\calP$ such that $\calP\Alg \cong \calT_\calP\Alg$ in $(\oCPO)\CAT/\oCPO^\calS$.
\end{prop_sub}

\begin{proof}
The proof is similar to that of Proposition \ref{rel_prop_2}. Let $\calP = (\Sigma, \calR, \calE)$. For each pair $\left(J, S\right) \in \N_\calS \times \calS$, we consider all the $(\Sigma, \oCPO)$-relations $\left[\vbar : J \vdash \bigvee_{n \geq 0} t_n \doteq t : S\right]$ in $\calR$ of sort $S$ in context $\vbar : J$ \eqref{notation_para}. We write $\Term(\Sigma; \vbar : J)_S^\calR$ for the subset of $\Term(\Sigma; \vbar : J)_S$ consisting of the $\Sigma$-terms of sort $S$ in context $\vbar : J$ that occur in some $(\Sigma, \oCPO)$-relation in $\calR$. We then write $P_{J, S}$ for the set $\Term(\Sigma; \vbar : J)_S^\calR$ equipped with the smallest preorder such that $\left(t_n\right)_{n \geq 0}$ is an $\omega$-chain with upper bound $t$ for each $(\Sigma, \oCPO)$-relation in $\calR$ as above. We then obtain an $\omega$-cpo presentation $\left(P_{J, S}, \lhd\right)$ where $\lhd$ consists of precisely the covers $t \lhd \left\{t_n\right\}_{n \geq 0}$ for all of the $(\Sigma, \oCPO)$-relations in $\calR$ as above. By Proposition \ref{free_cpo_prop}, there is a free $\omega$-cpo $P_{J, S, \omega}$ over $\left(P_{J, S}, \lhd\right)$, equipped with a morphism $\eta_{J, S} : \left(P_{J, S}, \lhd\right) \to P_{J, S, \omega}$ of $\omega$-cpo presentations. We then augment the classical $\calS$-sorted signature $\Sigma$ to obtain an ($\oCPO$)-enriched $\calS$-sorted signature $\Sigma^+$ by adjoining, for each pair $\left(J, S\right) \in \N_\calS \times \calS$, an operation symbol $\sigma_{J, S}$ of type $\left(J, S, P_{J, S, \omega}\right)$. We then augment the set $\calE$ of syntactic $\Sigma$-equations to obtain a set $\calE^+$ of syntactic $\left|\Sigma^+\right|$-equations by adjoining, for each pair $\left(J, S\right) \in \N_\calS \times \calS$, each $(\Sigma, \oCPO)$-relation $\left[\vbar : J \vdash \bigvee_{n \geq 0} t_n \doteq t : S\right]$ in $\calR$ of sort $S$ in context $\vbar : J$, and each $n \geq 0$, the following syntactic $\left|\Sigma^+\right|$-equation of sort $S$ in context $\vbar : J$:
\begin{equation}
\label{cpo_eq_2}
\left[\vbar : J \vdash \left(\sigma_{J, S}\right)_{\eta_{J, S}\left(t_n\right)}\left(\vbar\right) \doteq t_n : S\right],
\end{equation} 
as well as the following syntactic $\left|\Sigma^+\right|$-equation of sort $S$ in context $\vbar : J$:
\begin{equation}
\label{cpo_eq_3}
\left[\vbar : J \vdash \left(\sigma_{J, S}\right)_{\eta_{J, S}(t)}\left(\vbar\right) \doteq t : S\right].
\end{equation} 
We then define $\calT_\calP := \left(\Sigma^+, \calE^+\right)$. Note that for each $\calT_\calP$-algebra $A$ and each $(J, S) \in \N_\calS \times \calS$, the morphism of $\omega$-cpo presentations
\[ \sigma_{J, S}^A \circ \eta_{J, S} : \left(P_{J, S}, \lhd\right) \to \oCPO\left(A^J, A_S\right) \] is given by the rule $\left[\vec{v} : J \vdash t : S\right] \mapsto t^A$ for each $\left[\vec{v} : J \vdash t : S\right] \in \Term(\Sigma; \vbar : J)_S^\calR$, because the $\calT_\calP$-algebra $A$ satisfies the syntactic equations \eqref{cpo_eq_2} and \eqref{cpo_eq_3}. 

We now show that $\calP\Alg \cong \calT_\calP\Alg$ in $(\oCPO)\CAT/\oCPO^\calS$. Given a $\calP$-algebra $A$, we define a corresponding $\calT_\calP$-algebra $A^+$ with the same carrier object. For each $\sigma \in \Sigma \subseteq \Sigma^+$, we set $\sigma^{A^+} := \sigma^A$. For each pair $\left(J, S\right) \in \N_\calS \times \calS$, we define
\[ \sigma_{J, S}^{A^+} : P_{J, S, \omega} \to \oCPO\left(A^J, A_S\right) \] to be the unique $\omega$-continuous function that is induced by the morphism of $\omega$-cpo presentations \[ f_{J, S}^A : \left(P_{J, S}, \lhd\right) \to \oCPO\left(A^J, A_S\right) \] 
given by
\[ [\vbar : J \vdash t : S] \mapsto t^A. \] That $f^A_{J, S}$ is actually a morphism of $\omega$-cpo presentations readily follows from the fact that the $\calP$-algebra $A$ satisfies each $(\Sigma, \oCPO)$-relation in $\calR$. This equips the carrier object $A$ with the structure of a $\Sigma^+$-algebra $A^+$. As in the proof of Proposition \ref{rel_prop_2}, it readily follows that the $\Sigma^+$-algebra $A^+$ is a $\calT_\calP$-algebra. 

Conversely, let $A$ be a $\calT_\calP$-algebra, and let us define a corresponding $\calP$-algebra $A^-$ with the same carrier object. Since $\Sigma \subseteq \Sigma^+$, the carrier object $A$ already carries the structure of a $\Sigma$-algebra $A^-$. Furthermore, since $\calE \subseteq \calE^+$, the $\Sigma$-algebra $A^-$ satisfies each syntactic $\Sigma$-equation in $\calE$. So it remains to show that the $\Sigma$-algebra $A^-$ satisfies each $(\Sigma, \oCPO)$-relation of $\calR$. For each pair $\left(J, S\right) \in \N_\calS \times \calS$, let $\left[\vbar : J \vdash \bigvee_{n \geq 0} t_n \doteq t : S\right]$ be a $(\Sigma, \oCPO)$-relation in $\calR$ of sort $S$ in context $\vbar : J$. To show that the $\Sigma$-algebra $A^-$ satisfies this $(\Sigma, \oCPO)$-relation, we must show that
\[ \oCPO\left(A^J, A_S\right) \models t_n^A \leq t_{n+1}^A \] for each $n \geq 0$ and that $\bigvee_{n \geq 0} t_n^A = t^A$. Since $A$ is a $\calT_\calP$-algebra, we have the $\omega$-continuous function
\[ \sigma_{J, S}^A : P_{J, S, \omega} \to \oCPO\left(A^J, A_S\right). \] Since $P_{J, S} \models t_n \leq t_{n+1}$ and $P_{J, S} \models t_n \leq t$ for each $n \geq 0$, we deduce that
\[ \oCPO\left(A^J, A_S\right) \models \sigma_{J, S}^A\left(\eta_{J, S}\left(t_n\right)\right) \leq \sigma_{J, S}^A\left(\eta_{J, S}\left(t_{n+1}\right)\right) \] and
\[ \oCPO\left(A^J, A_S\right) \models \sigma_{J, S}^A\left(\eta_{J, S}\left(t_n\right)\right) \leq \sigma_{J, S}^A\left(\eta_{J, S}(t)\right) \] 
for each $n \geq 0$. Furthermore, since $t \lhd \left\{t_n\right\}_{n \geq 0}$ in $\left(P_{J, S}, \lhd\right)$, we also deduce that $\eta_{J, S}(t) \leq \bigvee_{n \geq 0} \eta_{J, S}\left(t_n\right)$ in $P_{J, S, \omega}$, and hence that \[\sigma_{J, S}^A\left(\eta_{J, S}(t)\right) \leq \sigma_{J, S}^A\left(\bigvee_{n \geq 0} \eta_{J, S}\left(t_n\right)\right) = \bigvee_{n \geq 0} \sigma_{J, S}^A\left(\eta_{J, S}\left(t_n\right)\right) \] in $\oCPO\left(A^J, A_S\right)$. Since $A$ is a $\calT_\calP$-algebra, we have $\sigma_{J, S}^A\left(\eta_{J, S}\left(t_n\right)\right) = t_n^A$ for each $n \geq 0$ as well as $\sigma_{J, S}^A\left(\eta_{J, S}(t)\right) = t^A$, from which we deduce that $t_n^A \leq t_{n+1}^A$ and $t_n^A \leq t^A$ for each $n \geq 0$ and that 
\[ t^A \leq \bigvee_{n \geq 0} t_n^A, \] so that $t^A = \bigvee_{n \geq 0} t_n^A$, as desired. Exactly as in the proof of Proposition \ref{rel_prop_2}, we have thus established a bijective correspondence between $\calP$-algebras and $\calT_\calP$-algebras, which extends to the desired isomorphism $\calT_\calP\Alg \cong \calP\Alg$ in $(\oCPO)\CAT/\oCPO^\calS$.  
\end{proof}

\begin{rmk_sub}
\label{ocpo_rmk}
When $\calS$ is a singleton, the $(\oCPO)$-categories of algebras for classical single-sorted equational theories with ($\oCPO$)-relations are precisely the \emph{varieties of continuous algebras} considered in \cite{strongly_finitary_cts} (see \cite[Definition 4.11]{strongly_finitary_cts}); for some examples, see \cite[Example 4.12]{strongly_finitary_cts}. In view of Propositions \ref{CPO_prop_1} and \ref{CPO_prop_2}, our Theorem \ref{variety_thm}, specialized to the case where $\V = \oCPO$ and $\calS$ is a singleton, thus recovers \cite[Corollary 5.9]{strongly_finitary_cts}. 
\end{rmk_sub}

\begin{rmk_sub}
\label{dcpo_rmk}
Using the notion of \emph{dcpo presentation} from \cite{pres_dcpos}, one could (in principle) prove results analogous to Propositions \ref{CPO_prop_1} and \ref{CPO_prop_2} for $\DCPO$-enriched $\calS$-sorted equational theories, allowing us to recover \cite[Corollary 6.16]{strongly_finitary_cts} as an instance of our Theorem \ref{variety_thm} (with $\V = \DCPO$ and $\calS$ a singleton); but we do not pursue this in detail here.
\end{rmk_sub}

\bibliographystyle{amsalpha}
\bibliography{mybib}

\end{document}